\documentclass[12pt]{article}

\usepackage{amsmath,amsthm,amssymb,graphicx}


\def\N{\mathbb N}
\def\N0{\mathbb N_0}

\def\real{\mathbb R}

\def\graph{{\mathcal G}}

\def\edgeset{{\mathcal E}}
\def\tree{{\mathcal T}}
\def\treebar{\overline{\tree}}
\def\graphbar{\overline{\graph}}
\def\edgelength{l}

\def\dop{{\mathcal L}}
\def\domain{{\mathcal D}}

\def\alg{{\mathcal A}}




\title{\bf Dirichlet to Neumann Maps \\
for Infinite Quantum Graphs}
\author{Robert Carlson \\
Department of Mathematics \\ 
University of Colorado at Colorado Springs \\
rcarlson@uccs.edu}


\newtheorem{thm}{Theorem}[section]

\newtheorem{lem}[thm]{Lemma}
\newtheorem{prop}[thm]{Proposition}

\theoremstyle{definition}

\theoremstyle{remark}


\newcommand{\thmref}[1]{Theorem~\ref{#1}}

\newcommand{\lemref}[1]{Lemma~\ref{#1}}
\newcommand{\propref}[1]{Proposition~\ref{#1}}

 \numberwithin{equation}{section}


\begin{document}

\maketitle

\date{}

\begin{abstract}

The Dirichlet problem and Dirichlet to Neumann map are 
analyzed for elliptic equations on   
a large collection of infinite quantum graphs.
For a dense set of continuous functions on the graph boundary,
the Dirichlet to Neumann map has values in the Radon measures on the graph boundary.

\end{abstract}

\vskip 25pt

2000 Mathematics Subject Classification 34B45

\vskip 25pt

\newpage

\section{Introduction}

The recent surge of activity in analysis on graphs is wide ranging, encompassing 
the spectral theory of finite graphs \cite{Chung,ColinB},
physics inspired problems on finite or infinite quantum graphs \cite{Exner}, 
and resistance network models \cite{Colin,Jorgensen,KL2}, often related to probability \cite{Doyle,Lyons,Woess},
to mention a few highlights. The study of harmonic functions is a common theme, 
particularly when the work is related to probability.  The demands of harmonic function theory and probability
have inspired studies of infinite tree boundaries \cite{Cartier,Cohen} and some work on
boundaries for more general graphs \cite{Georg11,Jorgensen,Woess2}. 

Extending the previous work beyond harmonic functions and trees,
this paper treats elliptic boundary problems, in particular the Dirichlet to Neumann map,
for a large class of infinite quantum graphs.
Broadly speaking, Dirichlet to Neumann maps describe the relationship between
the value of a function $f:\partial B \to \real $ on the boundary $\partial B $
of some spatial domain $B $ and the normal derivative at the boundary of an extension 
$u:B \to \real $ of $f$.  Typically, the function $u$ satisfies a differential equation 
in $B $, with solutions being uniquely determined by the boundary values $f$.
Physically, this formulation is used to describe the current flowing out of
a domain in response to an applied boundary voltage, or the heat flux at the boundary
in response to a fixed boundary temperature distribution. 

Calderon \cite{Calderon} inspired substantial work on the inverse problem of determining (nonconstant)
interior electrical conductivities in a domain in $\real ^N$ from the voltage to current
map at the boundary; see \cite{Uhlmann} for an overview. 
Related Steklov eigenvalue and expansion problems are treated in \cite{Auchmuty}. 
The Dirichlet to Neumann map and related problems have also been 
studied for a variety of network models.  Classical resistor networks are considered in \cite{Curtis}.
A quantum graph Dirichlet to Neumann map was used in \cite{Brown} to develop a finite 
tree version of the Borg-Levinson inverse eigenvalue theorem.
Boundary control methods as developed in \cite{Avdonin,Belishev} provide an alternative
technology for Dirichlet to Neumann mapping problems.

The class of quantum graphs treated here is motivated by the challenge of modeling
enormously complex biological networks such as the circulatory system \cite{Carlson06}, the nervous system \cite{Nicaise}, 
or the pulmonary network \cite{Maury,Zelig}.  The graphs $\graph $ are typically    
infinite, but have finite diameter and compact metric completions $\graphbar $.   
They also satisfy a 'weakly connected' condition introduced in \cite{Carlson08}.
Not only do these conditions provide productive idealizations of biological networks, but
they also appear in the study of continuous time Markov chains with 'explosions' \cite{Carlson11}. 

This work resolves a number of distinct problems while extending the earlier work on harmonic functions on 
infinite trees to encompass elliptic boundary problems for a large class of infinite quantum graphs.  
The results include an existence theorem for the Dirichlet problem for a class of elliptic equations.
The subsequent study of the Dirichlet to Neumann map is motivated by the problem of discussing flows through the 
graph boundary.  Classical work on the Dirichlet to Neumann map is usually handled in the context of spatial
domains with smooth boundaries.  Part of the challenge here is that the boundaries $\partial \graphbar $
of infinite graph completions usually lack any differentiable structure, so alternative formulations are required.

The second section begins with a review of developments from \cite{Carlson08}.
Properties of weakly connected metric graph completions and their algebra $\alg$ of test functions are 
recalled.  These graph completions have totally disconnected boundaries, so there is a rich collection
of clopen sets, that is sets which are both open and closed, which play an important role in the analysis.
The elliptic equations $u'' = qu$ with $q \ge 0$ and standard quantum graph vertex conditions 
are then introduced. A maximum principle is established for these equations; it proves to be a key tool
in the analysis.  A solution of the Dirichlet problem for harmonic functions
developed in \cite{Carlson08} is extended to include the more general elliptic equations and a larger 
class of graphs.  

The third section introduces the Dirichlet to Neumann map, beginning with finite graphs,
where classical derivatives at the boundary are available. 
Computations for the $\alpha - \beta $ tree illustrate the extraordinarily complex behavior
of derivatives of simple harmonic functions on infinite graphs.  Using the voltage to current map physical interpretation
for guidance, it proves productive to think in terms of current flow through a boundary set rather than
current flow at a point.  This idea is first implemented using test functions from $\alg $ to define 
a Dirichlet to Neumann function $\Lambda _q(F,\Omega )$.  Here $F$ may be  
any continuous function on the boundary $\partial \graphbar $ of the graph completion, but $\Omega $ 
is restricted to be a clopen subsets of $\partial \graphbar $.     

To ensure more regularity of the Dirichlet to Neumann function with respect to the set $\Omega $,
the fourth section considers the case when $F$ is the characteristic function $1_{\Omega (1)}$ 
of a clopen subset of $\partial \graphbar $.
The restricted functions $\Lambda _q(F,\Omega )$ extend in a standard way to signed Borel measures on $\partial \graphbar$
with the expected positivity when $F = 1_{\Omega (1)}$ and $\Omega \subset \Omega (1)$. 
In the final section, an extension by linearity in the first argument yields a  
Dirichlet to Neumann map which is a densely defined nonnnegative symmetric operator from the continuous functions
on $\partial \graphbar $ to the dual space of Radon measures.  

\section{Foundations}

This section reviews some of the terminology and results from \cite{Carlson08}, which should be
consulted for proofs and additional information.  
After introducing the geodesic metric, weakly connected metric graph completions are described.
These graph completions have totally disconnected boundaries, with a rich collection of
clopen sets, which are both open and closed.  
Weakly connected graph completions have a closely associated algebra $\alg$ of 'eventually flat' functions.  
Following this review, the elliptic equations $u'' = qu$ with $q \ge 0$ and standard quantum graph vertex conditions 
are introduced, and a maximum principle established.  A solution of the Dirichlet problem for harmonic functions
developed in \cite{Carlson08} is extended to include the more general elliptic equations and a larger 
class of graphs.    

\subsection{Metric graphs}

$\graph $ will denote a locally finite graph with a countable vertex set.
Interior vertices are those with more than one incident edge, 
while boundary vertices have a single incident edge.  
$\graph $ is assumed to be connected, but connectivity is not assumed
for subgraphs that appear in the course of proofs.

To study differential equations on $\graph $, edges $e_n$ in the edge set $\edgeset$ 
are identified with real intervals of finite length $\edgelength _n$. 
Loops and multiple edges with the same vertices are not directly considered, but can be accomodated by adding vertices. 
In the usual quantum graph style, the Lebesgue measure for intervals may be extended to 
a measure for $\graph $.  The Euclidean length on the intervals is extended to paths of finitely
many intervals by addition, and then to a 'geodesic' distance between points $p_1$ and $p_2$ by   
taking the infimum of the lengths of paths joining $p_1$ and $p_2$.   

As a metric space, ${\cal G}$ has a completion $\graphbar$ which is assumed to be compact in this work.
Without much difficulty \cite[Prop. 2.1]{Carlson08} one sees that    
$\graphbar$ is compact if and only if for every $\epsilon > 0$ there
is a finite set of edges $S = \{ e_1,\dots , e_n \}$ such that for every $y \in \graph $
there is a edge $e_k \in S$ and a point $x_k \in e_k$ such that $d(x_k,y) < \epsilon $.
Define the boundary $\partial \graph$ of $\graph$ to be the set of boundary vertices of $\graph $.
The interior of $\graph $ or $\graphbar $ is $\graph \setminus \partial \graph $.
The boundary $\partial \graphbar $ of the completion $\graphbar$ is 
defined to be the union $\partial \graphbar = \partial \graph \cup [\graphbar \setminus \graph ] $. 
Since $\partial \graphbar$ is closed, it too is compact. 
 
Graphs and their completions are more amenable to analysis when they satisfy an additional condition.
Say that $\graphbar $ is {\it weakly connected} if
for every pair of distinct points $x,y \in \graphbar $,
there is a finite set of points $W = \{ w_1,\dots ,w_K \} \subset \graph $ 
separating $x$ from $y$.  That is, there are disjoint open sets $U,V$ with
$\graphbar \setminus W = U \cup V$, and with $x \in U$, $y \in V$.
An alternative characterization is that there is a finite set of edges $W_{\edgeset}$ from $\graph $
such that every path from $x$ to $y$ contains an edge from $W_{\edgeset}$.

Completions of trees are weakly connected.  Less obviously, so are the completions of graphs with finite volume 
\cite{Carlson08}.  The following result, similar to one in \cite{Carlson11}, shows that trees can be modified
substantially without losing a weakly connected completion.

\begin{thm}
Suppose the graph $\graph $ is obtained from a tree $\tree $
by adding a sequence of edges $e_n$ whose lengths $l_n$ satisfy 
$\lim_n l_n = 0$.  Assume there is a positive constant $C$ such that 
\[d_{\graph}(x,y) \le d_{\tree}(x,y) \le C d_{\graph}(x,y), \quad x,y \in \tree.\]
Then $\graphbar $ is weakly connected.
\end{thm}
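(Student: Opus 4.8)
The plan is to transfer weak connectivity from the tree $\tree$, whose completion is already known to be weakly connected, to the larger graph $\graph$, the only new feature being the added edges that might defeat a tree separation. Throughout I use the edge formulation of weak connectivity: it suffices to produce, for each pair of distinct $x,y \in \graphbar$, a finite set $W_{\edgeset}$ of edges of $\graph$ meeting every path from $x$ to $y$. The engine will be a finiteness lemma saying that only finitely many added edges can cross a fixed tree cut, and this is exactly where the hypotheses $l_n \to 0$ and $d_{\tree} \le C d_{\graph}$ enter.

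First I would record the effect of the hypotheses on the completions. Each added edge $e_n$ joins two vertices of $\tree$, so every point of $\graph \setminus \tree$ lies within $\graph$-distance $l_n$ of a vertex of $\tree$. Since $\graphbar$ is compact, a Cauchy sequence converging to an ideal point $\xi \in \graphbar \setminus \graph$ eventually avoids every finite set of edges (else a subsequence would converge inside a compact finite subgraph), and hence eventually meets only added edges of arbitrarily small length. Replacing its terms by nearby tree vertices therefore yields a sequence in $\tree$ with the same limit, which is Cauchy for $d_{\tree}$ as well by the bound $d_{\tree} \le C d_{\graph}$. In this way the identity map on $\tree$ extends to a natural correspondence between the ideal boundaries of $\graphbar$ and $\treebar$, so that each $x \in \graphbar$ can be located relative to a tree cut.

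Next, given distinct $x,y$, I would choose an interior point $p$ of a tree edge $\hat e$ on the unique $\tree$-geodesic between the tree-images of $x$ and $y$, so that removing $\hat e$ splits $\tree$ into two subtrees $T_U, T_V$, with $x$ on the $U$-side and $y$ on the $V$-side. (If $x$ or $y$ is an interior point of an added edge, cutting that edge on either side of the point separates it immediately, so the substantive case is that $x,y$ are tree or ideal points.) Call an added edge $e_n$ \emph{bridging} if its endpoints $a_n, b_n$ lie in different subtrees $T_U, T_V$. Here is the crux: these endpoints are joined in $\graph$ by $e_n$, giving $d_{\graph}(a_n,b_n) \le l_n$, while the unique tree geodesic from $a_n$ to $b_n$ must traverse all of $\hat e$, giving $d_{\tree}(a_n,b_n) \ge \ell(\hat e)$. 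The bi-Lipschitz hypothesis then forces $\ell(\hat e) \le d_{\tree}(a_n,b_n) \le C\, d_{\graph}(a_n,b_n) \le C l_n$, whence $l_n \ge \ell(\hat e)/C > 0$. Since $l_n \to 0$, only finitely many added edges can be bridging.

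Finally I would set $W_{\edgeset} = \{\hat e\} \cup \{\text{bridging edges}\}$, a finite set, and verify it meets every path from $x$ to $y$. After deleting the edges of $W_{\edgeset}$, every surviving edge of $\graph$ — whether a tree edge other than $\hat e$ or a non-bridging added edge — joins two vertices of the same subtree $T_U$ or $T_V$; hence any path in the remaining graph stays entirely on one side and cannot connect the $U$-side of $x$ to the $V$-side of $y$. Consequently every $x$--$y$ path contains an edge of $W_{\edgeset}$, and $\graphbar$ is weakly connected. I expect the main obstacle to be the completion bookkeeping of the first two paragraphs — making the boundary correspondence precise and confirming that the tree-level separation genuinely survives in $\graphbar$ for ideal endpoints — whereas the finiteness of bridging edges, which drives the argument, drops out cleanly from the metric comparison together with $l_n \to 0$.
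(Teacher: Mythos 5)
Your proposal is correct and takes essentially the same route as the paper: identify $\graphbar \setminus \graph$ with $\treebar \setminus \tree$, cut a tree edge of length $L$ on the tree geodesic separating the two points, and use $d_{\tree} \le C\, d_{\graph}$ together with $l_n \to 0$ to conclude via $L \le C\, l_n$ that only finitely many added edges can join the two sides. The only difference is organizational — the paper removes the cut edge together with all $e_n$ for $n \le N$ (where $l_n < L/C$ for $n > N$) and derives a contradiction from any surviving crossing path, whereas you directly bound and collect the finitely many ``bridging'' edges into the cut set; the key inequality and boundary identification are identical.
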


\begin{proof}

We haven't changed the set of Cauchy sequences of vertices, so 
$\graphbar \setminus \graph = \treebar \setminus \tree $.
Suppose $x$ and $y$ are distinct points in $\graphbar \setminus \graph $.
Let $E$ be an edge with length $L$ on the path from $x$ to $y$ in $\tree $.
Let $U_x$ be the connected component of $\treebar \setminus E$ containing $x$,
and let $V_y$ be $\treebar \setminus (U_x \cup E)$.
Find $N$ so that $l_n < L/C$ for $n > N$, and for $n \le N$ remove edges $e_n$,  along
with $E$, from $\graphbar $, leaving $\graphbar _1$.  
Suppose there is a path in $\graphbar _1$ connecting $U_x$ to $V_y$.
Then there must be vertices $u$ and $v$ separated by $E$ in $\tree $ but joined by an edge $e_n$
whose length, by assumption, is smaller than $L/C$.  Then we have
$d_T(u,v) \ge L$, but $d_G(u,v) < L/C$, a contradiction.
Thus cutting $E$ together with $e_1,\dots ,e_N$ provides a separation of 
$x$ and $y$.

\end{proof}

If $\graphbar $ is weakly connected, then $\partial \graphbar$ is totally disconnected, meaning that
connected components of $\partial \graphbar $ are points.         
Totally disconnected compact metric spaces have a rich collection of clopen sets, 
which are both closed and open.  In fact \cite[p. 97]{Hocking}, if $\Omega $ is a totally disconnected compact metric space,
then for any $x \in \Omega $ and $\epsilon > 0$, there is a clopen set $U$
which contains $x$ and is contained in the $\epsilon $ ball centered at $x$.

The connectivity properties of $\graphbar $ are closely related to 
the algebra $\alg$ of 'eventually flat' functions 
$\phi : \graphbar \to \real $ 
which are continuous, infinitely differentiable on the open edges of $\graph $, 
and for which $\phi '(x) = 0$ for all $x$ in the complement of a finite set of edges, and
in a neighborhood of each vertex $v \in \graph $.
With pointwise addition and multiplication, $\alg$ is an algebra.
Since the constant functions are in $\alg$, 
the following result \cite[Lemma 3.5]{Carlson08} may be combined with the Stone-Weierstrass
Theorem to show that if $\graphbar $ is weakly connected and compact, then $\alg $ is uniformly 
dense in the real continuous functions on $\graphbar $.

\begin{lem} \label{testfunks}
Suppose $\graphbar $ is weakly connected.  If 
$\Omega $ and $\Omega _1$ are disjoint compact subsets of 
$ \graphbar $,
then there is a function $\phi \in \alg$ such that $0 \le \phi \le 1$,
\[ \phi (x) = 1, \quad x \in \Omega , \quad  
 \phi(y) = 0, \quad y \in \Omega _1. \]
\end{lem}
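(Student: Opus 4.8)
The plan is to prove a graph analogue of Urysohn's lemma, using weak connectivity to separate $\Omega$ and $\Omega_1$ by a single finite set of cut points, and then to build $\phi$ by hand as a function that is constant on each side of the cut and transitions smoothly across it. First I would dispose of the trivial cases where $\Omega$ or $\Omega_1$ is empty, taking $\phi \equiv 1$ or $\phi \equiv 0$. Assuming both are nonempty, the heart of the argument is a compactness reduction that promotes the pointwise separations guaranteed by weak connectivity to one finite separating set.

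\emph{Reduction step.} For each $x \in \Omega$ and $y \in \Omega_1$, weak connectivity provides a finite set $W(x,y) \subset \graph$ and disjoint open sets separating $x$ from $y$. Fixing $x$, the $y$-sides form an open cover of the compact set $\Omega_1$; I extract a finite subcover and let $W_x$ be the finite union of the corresponding separating sets. Cutting at $W_x$ separates $x$ from every point of $\Omega_1$, so the component $U_x$ of $x$ in $\graphbar \setminus W_x$ is open and disjoint from $\Omega_1$. The sets $U_x$ cover $\Omega$, and a second application of compactness yields finitely many points $x_1,\dots,x_p$; I then set $W = \bigcup_i W_{x_i}$, a finite subset of $\graph$. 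The key claim is that no connected component of $\graphbar \setminus W$ meets both $\Omega$ and $\Omega_1$: if a component $K$ contained $a \in \Omega$ and $b \in \Omega_1$, then $a$ lies in some $U_{x_i}$ while $b$ does not, since $U_{x_i} \cap \Omega_1 = \emptyset$; yet the path in $K$ joining $a$ to $b$ avoids $W \supseteq W_{x_i}$, contradicting that $W_{x_i}$ separates $a$ from $b$. Hence $\graphbar \setminus W$ splits into disjoint open sets $U \supseteq \Omega$ and $V \supseteq \Omega_1$, each a union of components.

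\emph{Construction step.} The finite set $W$ consists of interior points of finitely many edges $E_1,\dots,E_m$ and is disjoint from the closed set $\Omega \cup \Omega_1$, so there is $\delta > 0$ with $d(W,\Omega \cup \Omega_1) > \delta$. Around each cut point $c_j$ I choose a small closed interval $I_j$ in the interior of its edge, of radius less than $\delta$, with the $I_j$ pairwise disjoint and bounded away from all vertices. I define $\phi$ to equal $1$ on the part of $\graphbar \setminus \bigcup_j I_j$ lying in $U$, to equal $0$ on the part lying in $V$, and on each $I_j$ to be a smooth monotone transition between its two endpoint values, chosen flat at the endpoints of $I_j$. This $\phi$ is continuous and smooth on every open edge; its derivative is supported in $\bigcup_j I_j$, hence vanishes off the finite edge set $\{E_1,\dots,E_m\}$ and in a neighborhood of every vertex, so $\phi \in \alg$, with $0 \le \phi \le 1$ by the choice of transitions. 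Since each $I_j$ avoids $\Omega \cup \Omega_1$, the values on $\Omega$ and $\Omega_1$ are read off from the ambient components, giving $\phi \equiv 1$ on $\Omega$ and $\phi \equiv 0$ on $\Omega_1$.

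I expect the genuine work to lie entirely in the reduction step: passing from the pointwise separations to a single finite cut set requires two nested compactness arguments, together with the verification that unions of separating sets preserve separation, culminating in the component claim. By contrast, once a single finite $W$ is in hand the construction of $\phi$ is routine, the only subtlety being to keep the transition intervals inside edge interiors, pairwise disjoint, and clear of $\Omega \cup \Omega_1$, all of which the positive distance $\delta$ guarantees.
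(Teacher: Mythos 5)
Note first that the paper does not prove this lemma at all: it is quoted as Lemma 3.5 of \cite{Carlson08}, so your argument can only be compared against the statement and the surrounding definitions, not a proof in this text. Your overall plan (double compactness to promote the pointwise separations of weak connectivity to one finite cut set $W$, then a hand-built transition function) is the natural one, but your construction step rests on a claim that is simply unjustified and in general false: that $W$ ``consists of interior points of finitely many edges and is disjoint from the closed set $\Omega \cup \Omega_1$.'' Weak connectivity only supplies finite sets $W \subset \graph$; their points may be \emph{vertices}, and they may lie \emph{inside} $\Omega$ or $\Omega_1$, since these are arbitrary compact subsets of $\graphbar$ and can contain graph points, vertices included. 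Both failures break your construction where you declared it routine: if $W$ meets $\Omega \cup \Omega_1$ there is no $\delta > 0$ with $d(W, \Omega \cup \Omega_1) > \delta$, and a transition interval through such a cut point would assign intermediate values to points of $\Omega$ or $\Omega_1$; and membership in $\alg$ requires $\phi' = 0$ in a neighborhood of \emph{every} vertex, so a transition cannot straddle a vertex cut point at all --- there $\phi$ must be locally constant, with a separate monotone transition run in the interior of each incident edge, kept clear of the wrong compactum by using $d(w,\Omega_1) > 0$ for $w \in \Omega$ (and symmetrically), which holds because the two compacta are disjoint. These repairs are available but constitute real missing content.

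There is a second, more structural soft spot in your reduction step: you take ``the component $U_x$ of $x$ in $\graphbar \setminus W_x$'' to be open, and you join $a$ to $b$ by ``the path in $K$'' inside a component $K$. In a metric completion $\graphbar$, which is not assumed (and not known at this point) to be locally connected, components of open sets need not be open, and components need not be path-connected, so both steps are unjustified as written. Fortunately the separation data lets you bypass components entirely: with $y_1,\dots,y_k$ from the subcover, set $U'_x = \bigcap_j U_{x,y_j}$, which is open, contains $x$, and is disjoint from $\Omega_1 \setminus W_x$, and observe that its complement in $\graphbar \setminus W_x$ equals $\bigcup_j (V_{x,y_j} \setminus W_x)$, which is open; after the second compactness step, take $U = \bigcup_i (U'_{x_i} \setminus W)$ and $V = (\graphbar \setminus W) \setminus U$, and replace your path argument by pure connectedness: any connected subset of $\graphbar \setminus W_{x_i}$ is covered by two disjoint open pieces and hence lies wholly in one of them. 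With that substitution, plus the vertex and $W \cap (\Omega \cup \Omega_1)$ repairs above, your scheme does yield a valid $\phi \in \alg$; as submitted, though, the proof has genuine gaps precisely at the points you judged trivial.
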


\subsection{Forms and operators}

Suppose $q:\graph \to \real$ is nonnegative and locally integrable.
The main focus of this work is the study of certain solutions of the equation 
\[u'' = qu, \quad u:\graph \to \real .\]
The equation is to hold in the sense that $u'$ is absolutely continuous, and the equation holds a.e.
on the edges of $\graph $.  In addition  
the function $u$ is assumed to satisfy the standard continuity and derivative junction conditions
at vertices $v$ with degree at least $2$ in $\graph $, 
\begin{equation} \label{intvc}
\lim_{x \in e(i) \to v} u_i(x) = \lim_{x \in e(j) \to v}u_j(x), 
\quad e(i),e(j) \sim v.
\end{equation}
\[\sum_{e(i) \sim v} \partial _{\nu} u_i(v) = 0.\]
Here the derivative $\partial _{\nu} u_i(v) = u'$ is computed in local
coordinates identifying $[a,b]$ with the edge $e(i)$ of length $b-a$,
and $a$ is the coordinate value for $v$.
Functions $u:\graph \to \real$ satisfying the equation and vertex conditions will be called  
$q$-harmonic functions. 

Given an edge weight function $\omega : \edgeset \to (0,\infty )$ 
constant on each edge, define the weighted inner product 
\[\langle f,g\rangle _{\omega } = \sum_e \int_e f(x)g(x) \omega  (e) \ dx ,\]
and corresponding Hilbert space $L^2_{\omega  }$.  Let 
$\domain _{\alg ,\omega  } = \alg \cap L^2_{\omega  }$.   
Introduce the form
\begin{equation} \label{biform}
Q(f,g) = \int_{\graph } f'(x)g'(x) + q(x)f(x)g(x),
\end{equation}
with domain $\domain _{\alg ,\omega  } $,   
and the differential expression
\[L_{\omega  }f =  \frac{1}{\omega  (e)} [-f''(x) + q(x)f(x)].\]   
For $f,g \in \domain _{\alg ,\omega }$, 
integration by parts leads to the alternate form
\[Q(f,g) = \sum_{e \in \edgeset } \int_e \frac{1}{\omega  (e)} [-f''(x) + q(x)f(x)]g(x) \omega  (e)
= \sum_{e \in \edgeset } \int_e [L_{\omega }f(x)]g(x) \omega  (e).\]
These computations are summarized in the following proposition.
 
\begin{prop}
The bilinear form
\[Q(f,g), \quad f,g \in \domain _{\alg ,\omega  },\]
is densely defined in $L^2_{\omega }$, has a nonnegative quadratic form $Q(f,f)$,
and satisfies 
\[Q(f,g) = \langle L_{\omega  }f,g \rangle _{\omega } = \langle f, L_{\omega  }g \rangle _{\omega } .\]
\end{prop}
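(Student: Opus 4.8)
The plan is to verify the three assertions—density of the form domain, nonnegativity of $Q(f,f)$, and the symmetric representation in terms of $L_{\omega}$—separately, since each rests on a different feature of the setup.

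For density I would work edge by edge. Identifying $L^2_{\omega}$ with the orthogonal direct sum $\bigoplus_{e} L^2(e,\omega(e)\,dx)$ over the edges (the vertices being a null set), it suffices to approximate functions supported on a single edge. Given any edge $e$ and any $\psi \in L^2(e)$, choose smooth functions $\psi_k$ with compact support in the open interior of $e$ converging to $\psi$ in $L^2(e)$; extending each $\psi_k$ by zero to the rest of $\graph $ produces a function that is continuous on $\graphbar $, smooth on the open edges, flat outside the single edge $e$, and identically zero—hence flat—in a neighborhood of every vertex. Such a function therefore lies in $\alg $, and having compact support it lies in $L^2_{\omega}$, so it belongs to $\domain _{\alg ,\omega }$. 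Finite sums of these single-edge approximants are again in $\domain _{\alg ,\omega }$ and are dense in $\bigoplus_{e} L^2(e)$, which gives density of $\domain _{\alg ,\omega }$ in $L^2_{\omega}$. I note that this argument does not require the total $\omega $-volume to be finite, which matters precisely because the constant functions in $\alg $ need not themselves lie in $L^2_{\omega}$.

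Nonnegativity is immediate: for $f \in \domain _{\alg ,\omega }$ the eventual flatness makes $f'$ vanish off a finite set of edges, so $\int_{\graph }(f')^2$ is a finite sum of integrals of smooth functions over compact edges, and $Q(f,f)=\int_{\graph }(f')^2 + q f^2$ is a sum of two integrals with nonnegative integrands because $q \ge 0$. Hence $Q(f,f)\ge 0$. The symmetric representation is where the key structural point enters. Integrating $\int_e f'g'$ by parts on each edge produces $\int_e(-f'')g$ together with a boundary term evaluated at the two endpoints of $e$. The crucial observation is that every element of $\alg $ has vanishing derivative in a neighborhood of each vertex $v \in \graph $; thus at every endpoint of every edge the factor $f'$ is zero, and all boundary contributions—including those that would otherwise have to be reorganized using the vertex derivative conditions—drop out identically. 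Summing over edges and inserting the weight then gives exactly $Q(f,g)=\sum_e\int_e \frac{1}{\omega (e)}[-f''+qf]g\,\omega (e)=\langle L_{\omega }f,g\rangle _{\omega }$, as recorded before the statement. Finally, because $Q(f,g)$ and $\langle \cdot ,\cdot \rangle _{\omega }$ are symmetric in their two arguments, applying the same identity with $f$ and $g$ interchanged yields $\langle f,L_{\omega }g\rangle _{\omega }=\langle L_{\omega }g,f\rangle _{\omega }=Q(g,f)=Q(f,g)$, completing the chain of equalities.

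The main obstacle I anticipate is not any of these three steps in isolation but the bookkeeping of convergence for the potential term: on the infinitely many edges where $f$ and $g$ are locally constant one must check that $\sum_e\int_e q\,fg$ converges absolutely, which I would establish from the uniform boundedness of $\alg $-functions on the compact space $\graphbar $ together with the standing integrability hypothesis on $q$. Once that absolute convergence is in hand, the edge-by-edge integration by parts and the rearrangement into the weighted inner product are fully justified, and the three assertions follow.
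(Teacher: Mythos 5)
Your proposal is correct and takes essentially the same route as the paper, whose entire proof is the edge-by-edge integration by parts displayed immediately before the proposition, with the boundary terms vanishing because functions in $\alg$ have $\phi'=0$ in a neighborhood of every vertex --- exactly the mechanism you identify. The extra material you supply (the single-edge bump-function density argument and the absolute-convergence bookkeeping for $\sum_e \int_e q\,f\,g$ via boundedness of $\alg$-functions on the compact $\graphbar$ together with integrability of $q$) is left implicit in the paper; note only that $q\in L^1(\graph )$ is formally assumed there only from Section 3 onward, though without some such hypothesis the pairing $\langle L_{\omega }f,g\rangle _{\omega }$ need not be finite, so your care on this point is warranted rather than superfluous.
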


The form $Q$ has an associated inner product \cite[pp. 308-318]{Kato} 
\[\langle f,g \rangle _{\omega  ,1} = \langle f,g \rangle _{\omega  } + Q(f,g),\]
with the corresponding norm $\| f \| _{\omega  ,1}$.
Completing the domain $\domain _{\alg ,\omega  }$ with respect to this inner product, 
construct the Hilbert space $H^1(\omega  )$. 

Integration from $x$ to $y$ over a path $\gamma $ of length at most $4 d(x,y)$ yields the estimate 
\begin{equation} \label{simpint}
|f(y) - f(x)|^2 = |\int_{\gamma } f'(t) \ dt |^2
\le \int_{\gamma } 1 \ dt \int_{\gamma } |f'(t)|^2 \ dt \le 4d(x,y) Q(f,f).
\end{equation}
Since $\graph $ is connected with finite diameter, the arguments of \cite{Carlson08} show
that functions $f$ in $H^1(\omega  )$ are continuous on $\graph $.  There is a constant $C$ such that 
\[\sup_{x \in \graph }|f(x)| \le C\| f \| _{\omega  ,1},\]
and $f$ satisies a Lipshitz  estimate
\begin{equation} \label{Lipest}
|f(y) - f(x)| \le 2d(x,y)^{1/2}\| f \| _{\omega  ,1}.
\end{equation}  
Consequently, $f$ has a unique continuous extension to $\graphbar$. 

Let $\domain _0$ denote the set of functions $f \in \alg $
whose support is contained in the union of a finite collection of edges, and  
such that $f(x) = 0 $ for all $x$ in a neighborhood of any boundary vertex.
Let $S_0$ denote the symmetric nonnegative operator acting by $L_{\omega }$ 
on the domain $\domain _0$ in $L^2_{\omega }$.
$S_0$ has a self adjoint Friedrich's extension $\dop _0$ acting on $L^2_{\omega }$.
If $q$ is not bounded, the description of the domain of $S_0$ is more delicate than described.
The reader should either consult \cite[pp. 343--346]{Kato} for the necessary modifications,
or simply assume that $q$ is bounded. 

Suppose $\omega  (\graph ) = \sum_e \omega  (e) < \infty $.  
If $f \in \domain _0$ with $\| f \| _{\omega  } = 1$, then 
there is some point $y \in \graph $ with $|f(y)|^2 \ge \omega  ^{-1}(G)$.  If the connected graph $\graph $ 
has a boundary vertex or has infinitely many edges, then there will be some $x \in \graph $ with $f(x) = 0$.
Then \eqref{simpint} gives
\[\omega  ^{-1}(\graph ) \le |f(y)|^2 = |f(y) - f(x)|^2 \le 4R Q(f,f),\]
leading to the next result.

\begin{prop} \label{lowbnd}
Suppose $\omega  (\graph ) = \sum_e \omega  (e) < \infty $ and 
$\partial \graphbar \not= \emptyset $.  Then the self adjoint operator $\dop _{0}$
on $L^2_{\omega }$ has a strictly positive lower bound.  That is, there is a 
$ C > 0$ such that for all $f$ in the domain of $\dop _0$ with $\| f \| _{\omega } = 1$,
$\langle \dop _0 f,f \rangle _{\omega } \ge C$.
\end{prop}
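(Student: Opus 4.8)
The plan is to leverage the inequality derived immediately before the statement. For every $f \in \domain _0$ with $\|f\|_{\omega } = 1$ the displayed computation yields
\[ Q(f,f) \ge \frac{1}{4R\, \omega (\graph )} =: C > 0, \]
where $R = \mathrm{diam}(\graph ) < \infty $; here the hypotheses $\omega (\graph ) < \infty $ and $\partial \graphbar \ne \emptyset $ are exactly what guarantee both a point $y$ with $|f(y)|^2 \ge 1/\omega (\graph )$ and a point $x$ with $f(x) = 0$, and $d(x,y) \le R$ lets me replace $d(x,y)$ by the fixed diameter in \eqref{simpint}. Thus the symmetric operator $S_0$ already satisfies $\langle S_0 f, f\rangle _{\omega } = Q(f,f) \ge C\|f\|_{\omega }^2$ on $\domain _0$. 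What remains is to propagate this bound from $\domain _0$ to the full domain of the Friedrichs extension $\dop _0$.

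First I would recall the standard construction of the Friedrichs extension (see \cite{Kato}): the form domain of $\dop _0$ is the completion $H_0$ of $\domain _0$ in the norm $\|\cdot\|_{\omega ,1}$, the form $Q$ extends continuously to $H_0$, and for every $f$ in the operator domain of $\dop _0$ (which is contained in $H_0$) one has the identity $\langle \dop _0 f, f\rangle _{\omega } = Q(f,f)$. Since the lower bound of a semibounded self adjoint operator equals the infimum of its quadratic form over the normalized form domain, it suffices to show that the estimate $Q(f,f) \ge C\|f\|_{\omega }^2$ persists for all $f \in H_0$.

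The passage to the closure is the step I expect to demand the most care, and the genuine obstacle inside it is not the limit itself but the identification of the abstract completion $H_0$ with honest functions. Given $f \in H_0$, pick $f_n \in \domain _0$ with $\|f_n - f\|_{\omega ,1} \to 0$; because $\|\cdot\|_{\omega } \le \|\cdot\|_{\omega ,1}$ we get $f_n \to f$ in $L^2_{\omega }$, so $\|f_n\|_{\omega } \to \|f\|_{\omega }$, and since $Q(g,g) = \|g\|_{\omega ,1}^2 - \|g\|_{\omega }^2$ we also get $Q(f_n,f_n) \to Q(f,f)$. Passing to the limit in $Q(f_n,f_n) \ge C\|f_n\|_{\omega }^2$ gives $Q(f,f) \ge C\|f\|_{\omega }^2$ throughout $H_0$, hence on the domain of $\dop _0$, which together with $\langle \dop _0 f, f\rangle _{\omega } = Q(f,f)$ is the asserted strictly positive lower bound. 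For this argument to be meaningful the symbol $\|f\|_{\omega }$ must be unambiguous for elements of the completion, and that is exactly what the uniform continuity estimates \eqref{simpint} and \eqref{Lipest} provide: $\|\cdot\|_{\omega ,1}$-Cauchy sequences converge uniformly on $\graphbar $, so $H_0$ embeds in the continuous functions and its $L^2_{\omega }$ norm is well defined.
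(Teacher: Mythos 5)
Your proof is correct and takes essentially the same approach as the paper: the displayed computation preceding the proposition (a point $y$ with $|f(y)|^2 \ge \omega^{-1}(\graph )$, a zero of $f$ guaranteed by $\partial \graphbar \ne \emptyset$, and the estimate \eqref{simpint}) yields the lower bound $Q(f,f) \ge C \| f\| _{\omega }^2$ on $\domain _0$. The paper leaves the passage to $\dop _0$ implicit, relying on the standard fact that the Friedrichs extension preserves the lower bound of the symmetric operator; your explicit closure argument simply fills in that step correctly.
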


\subsection{$q$-harmonic functions}

There is a  maximum principle for $q$-harmonic functions.

\begin{lem} \label{maxprin}
Suppose $q$ is a nonnegative locally integrable function, and
$u:\graph \to \real $ is $q$ - harmonic.
If $u$ has a positive global maximum at an interior point of $\graph $,
then $u$ is constant.
\end{lem}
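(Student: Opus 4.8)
The plan is to show that the level set $A = \{x \in \graph : u(x) = M\}$, where $M = u(x_0) > 0$ is the value of the assumed positive global maximum at the interior point $x_0$, is both closed and open relative to the interior $\graph \setminus \partial \graph$. Since $\graph \setminus \partial \graph$ is connected and $A$ contains $x_0$, this forces $u \equiv M$ on the interior, and then continuity of $u$ on $\graph$ extends this to the boundary vertices, so $u$ is constant. Closedness of $A$ is immediate from the continuity of $u$. The essential input for openness is positivity: wherever $u > 0$ one has $u'' = qu \ge 0$ a.e., so on each edge the absolutely continuous (hence continuous) derivative $u'$ is nondecreasing and $u$ is convex. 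This is exactly where the hypothesis that the maximum be \emph{positive} enters, and it cannot be dropped.

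First I would treat a point $x$ of $A$ lying in the interior of an edge. A neighbourhood of $x$ lies in the region $u > 0$, so $u$ is convex there with continuous derivative; since $x$ is an interior maximum, $u'(x) = 0$, and monotonicity of $u'$ forces $u' \le 0$ to the left and $u' \ge 0$ to the right of $x$. Combined with $u \le M$ this yields $u \equiv M$ on a full two-sided neighbourhood of $x$. The decisive case is a point $x = v$ of $A$ that is an interior vertex. On each incident edge $e(i)$, parametrised so that $v$ corresponds to the left endpoint $a_i$, the function $u$ is convex near $a_i$ and attains there its maximum $M$ over that edge, so convexity gives $\partial_\nu u_i(v) = u_i'(a_i) \le 0$. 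I expect this step, together with the derivative junction condition $\sum_{e(i)\sim v}\partial_\nu u_i(v)=0$, to be the crux of the argument: a sum of nonpositive terms vanishing forces every term to vanish, whence $u_i'(a_i) = 0$ for each incident edge. Feeding $u_i'(a_i)=0$ back into convexity (an $u'$ that is nondecreasing from the value $0$) shows $u$ is nondecreasing into each edge, and with $u\le M$ this gives $u\equiv M$ on a neighbourhood of $v$.

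These two local statements show $A$ is open relative to $\graph \setminus \partial \graph$, since the only neighbourhoods to verify are two-sided edge intervals and stars of half-edges about interior vertices, and neither meets $\partial\graph$ when taken small. I would deliberately exclude the boundary vertices at this stage, because at a degree-one vertex there is no junction condition available to force the one-sided derivative to vanish, so $A$ need not be open there a priori. Instead I would invoke that $\graph \setminus \partial \graph$ is connected — deleting the degree-one tips cannot disconnect a connected metric graph, as a boundary vertex can never be an interior point of a path — so the nonempty clopen set $A\cap(\graph \setminus \partial \graph)$ must be all of $\graph \setminus \partial \graph$. Continuity of $u$ and density of the interior then give $u\equiv M$ at the boundary vertices as well, so $u$ is constant. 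The main obstacle throughout is the interior-vertex case, where the conclusion genuinely relies on the interplay between convexity on each edge and the conservation of flux expressed by the derivative junction condition.
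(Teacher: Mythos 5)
Your proof is correct, and it takes a genuinely different global route from the paper's, even though the local mechanics coincide. Both arguments turn on the same two ingredients: at a point where the positive maximum $M$ is attained, every outward edge derivative is nonpositive, and at an interior vertex the junction condition $\sum_{e(i)\sim v}\partial_\nu u_i(v)=0$ then forces each $\partial_\nu u_i(v)=0$ (the paper invokes this tersely ``by the vertex conditions''; you spell out the sum-of-nonpositive-terms step, which is indeed the crux); and near such a point $u>0$, so $u''=qu\ge 0$ a.e.\ makes the absolutely continuous $u'$ nondecreasing. Where you diverge is in the bookkeeping: you prove the level set $u^{-1}(M)$ is relatively clopen in $\graph\setminus\partial\graph$ and conclude by connectedness of the interior, giving a direct proof that the maximum propagates locally. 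The paper instead argues by contradiction: it takes a point $x_0$ with $u(x_0)<M$, a path $\gamma$ in the interior to the maximum point, and $t_0=\sup\{t\mid u(\gamma(s))<M \text{ for all } s<t\}$, producing a single point $x_1$ with $u(x_1)=M$ whose every neighborhood contains smaller values; it then uses $u'(x)=\int_{x_1}^x qu\ge 0$ together with nonconstancy near $x_1$ to find $u(x)>u(x_1)$, contradicting the global maximum. The paper's version is shorter because it needs derivative control only at one well-chosen point rather than openness of the whole level set; your version buys a slightly stronger local statement (constancy in a full neighborhood of any interior max point) and makes explicit a fact the paper also uses silently, namely that the interior of a connected metric graph is connected (its path $\gamma$ is taken in the interior). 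One small simplification available to you: at an interior vertex, $\partial_\nu u_i(v)\le 0$ already follows from the difference quotients at a maximum, so convexity is only really needed for the propagation step after the derivatives are known to vanish.
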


\begin{proof}
Arguing by contradiction, suppose $u$ is nonconstant with a positive global maximum
at the interior point $x_2 \in \graph $.  Find another interior point $x_0$ with $u(x_0) < u(x_2)$.  
The first step is to find an interior point $x_1$ with $u(x_1) = u(x_2)$ such that
every neighborhood of $x_1$ contains points $x$ with $u(x) < u(x_1)$.
Recalling that $\graph $ is connected,
let $\gamma :[0,1] \to \graph $ be a path in the interior of $\graph $
with $\gamma (0) = x_0$ and $\gamma (1) = x_2$.  If
\[t_0 = \sup \ \{ t \in [0,1] \ | \ u(\gamma (s)) < u(x_2) \ { \rm for \ all } \ s < t \} .\]
then $x_1 = \gamma (t_0)$ has the desired properties.

If $x_1$ is not a vertex, then $u'(x_1) = 0$ by Calculus.
If $x_1$ is an interior vertex, then $u_n'(x_1) = 0$ 
for each edge $e_n$ incident on $x_1$ by the vertex conditions \eqref{intvc}.

The function $u''$ is locally integrable, so \cite[p. 110]{Royden} for $x$ near $x_1$, 
\[u'(x) = \int_{x_1}^x u''(t) \ dt = \int_{x_1}^x q(t)u(t) \ dt \ge 0.\]
Since $u$ is not constant in any neighborhood of $x_1$,
$u'(x) > 0$ for some $x$ arbitrarily close to $x_1$.  
Since $u'$ is continuous on each edge of $\graph $, 
\[u(x) - u(x_1) = \int_{x_1}^x u'(t) \ dt > 0, \]
so $x_1$ is not a local maximum.
\end{proof}

\begin{lem} \label{max01}
With the hypotheses of \lemref{maxprin},
suppose $u:\graphbar \to \real $ is continuous, nonconstant, $q$-harmonic on $\graph $,
and $0 \le u(x) \le 1$ for all $x \in \partial \graphbar$.
Then $0 < u(x) < 1$ for all $x \in \graphbar \setminus \partial \graphbar $.

\end{lem}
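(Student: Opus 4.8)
The plan is to treat the upper and lower bounds separately, since the maximum principle \lemref{maxprin} applies cleanly only to a \emph{positive} maximum. Throughout, recall that $\graphbar $ is compact and $u$ is continuous, so $u$ attains both a maximum and a minimum on $\graphbar $.

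For the upper bound, suppose toward a contradiction that $u(x_0) \ge 1$ at some interior point $x_0$. Then $M = \max_{\graphbar } u \ge 1 > 0$. Since $u$ is nonconstant, \lemref{maxprin} forbids this positive maximum from being attained at an interior point, so $M$ is attained only on $\partial \graphbar $; as the boundary values lie in $[0,1]$ this forces $M \le 1$, hence $M = 1$. But then $u(x_0) = 1 = M$ exhibits an interior maximizer, a contradiction. Thus $u(x) < 1$ for every interior point $x$.

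For the lower bound I would proceed in two stages. First, $u \ge 0$ on $\graphbar $: were $\min_{\graphbar } u < 0$, the function $-u$ (which is again nonconstant and $q$-harmonic, the equation being linear and homogeneous) would have a positive maximum, necessarily on $\partial \graphbar $ by \lemref{maxprin}, where $-u \le 0$ --- a contradiction. The second, strict, stage is the delicate one: I must rule out $u(x_0) = 0$ at an interior point $x_0$. Since $u \ge 0$, such an $x_0$ is a global minimum. If $x_0$ lies in the interior of an edge, then $u'(x_0) = 0$, so $u$ has vanishing Cauchy data at $x_0$; because $q$ is locally integrable, uniqueness for the linear equation $u'' = qu$ forces $u \equiv 0$ on that edge. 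If $x_0$ is an interior vertex, then on each incident edge $u$ attains its minimum $0$ at $x_0$, so each inward derivative satisfies $\partial _{\nu} u_i(x_0) \ge 0$; the junction condition $\sum_{e(i) \sim x_0} \partial _{\nu} u_i(x_0) = 0$ then forces every $\partial _{\nu} u_i(x_0) = 0$, and uniqueness again gives $u \equiv 0$ on each incident edge.

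It remains to propagate. Since every vertex shared by two distinct edges has degree at least two, the vanishing of $u$ on one edge produces an interior vertex at which $u = 0$, where the vertex argument above makes $u$ vanish on all incident edges. Hence the set of edges on which $u \equiv 0$ is nonempty and closed under adjacency, and connectedness of $\graph $ forces $u \equiv 0$ throughout, contradicting that $u$ is nonconstant. Therefore $u(x) > 0$ on the interior as well. The main obstacle is precisely this strong maximum principle at the value $0$: the weak principle \lemref{maxprin} yields only $u \ge 0$, and upgrading to strict positivity requires the ODE uniqueness argument together with careful use of the derivative junction condition \eqref{intvc} to kill all the one-sided derivatives at an interior vertex.
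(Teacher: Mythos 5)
Your proof is correct and follows essentially the same route as the paper's: the upper bound and nonnegativity come from \lemref{maxprin} applied to $u$ and $-u$, and strict positivity is obtained by showing that an interior zero forces vanishing Cauchy data via the sign argument and the junction condition \eqref{intvc}, whence $u \equiv 0$ on incident edges by ODE uniqueness and then everywhere by propagation through interior vertices of the connected graph. Your write-up is merely a bit more explicit than the paper's about the adjacency propagation and the final contradiction with nonconstancy.
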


\begin{proof}
By \lemref{maxprin} $u(x) < 1$ in $\graphbar \setminus \partial \graphbar $.  
Similarly, if $u(x_0) < 0$ for some $x_0 \in \graphbar$, then $-u$,
has a positive interior maximum, which is not possible.

This leaves the case when $u(x_0) = 0$ for some $x_0 \in \graphbar \setminus \partial \graphbar $.  
Interior points of edges may be treated as vertices of degree two, so our focus is on 
a vertex $v$ with degree at least two, with $u(v) = 0$.
Let $u_i$ denote the restriction of $u$ to an edge $e_i$ incident on $v$. 
Since $u_i(x) \ge 0$, the derivatives, computed in 
outward pointing local coordinates, satisfy $\partial _{\nu} u_i(v) \ge 0$.
The vertex conditions \eqref{intvc} then force $u_i'(v) = 0$ for each incident edge $e_i$.
On $e_i$ the function $u_i$ satisfies the ordinary differential equation $u_i'' = qu_i$, 
with initial condition $u(v) = u'(v) = 0$, so $u(x) = 0$ for all $x \in e_i$.
These observations also imply that $u$ is identically $0$ along any path once it vanishes
at any point in $\graphbar \setminus \partial \graphbar $.  

\end{proof}

Extending Theorem 4.2 of \cite{Carlson08} with a similar proof, the next result shows that if 
$\omega  ^{-1}q \in L^2(\omega  )$, then the Dirichlet problem is solvable for
$q$-harmonic functions.  In particular the Dirichlet problem for the usual harmonic functions 
($q = 0$) is solvable whenever $\graphbar $ is weakly connected and compact. 

\begin{thm} \label{Dprob}
Suppose $\graph $is connected, while $\graphbar $ is weakly connected and compact. 
Assume there is a weight $\omega  $ such that $\omega  (\graph ) = \sum_e \omega  (e) < \infty $, and
the nonnegative locally integrable function $q$ satisfies
\begin{equation} \label{qbnd}
\int_{\graph } (\omega  ^{-1}q)^2 \omega  < \infty .
\end{equation}
If $\partial \graphbar \not= \emptyset$,
then every continuous function $U:\partial \graphbar \to \real $ has a unique
extension to a continuous function $u:\graphbar \to \real $
that is $q$-harmonic on $\graph $.
\end{thm}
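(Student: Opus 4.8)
The plan is to dispatch uniqueness quickly with the maximum principle and then build existence in two stages: first for boundary data restricted from the test algebra $\alg$, using the variational machinery of \S 2.2, and then for arbitrary continuous data by uniform approximation. For uniqueness, suppose $u_1,u_2$ are continuous $q$-harmonic extensions of $U$ and set $w = u_1 - u_2$, so $w$ is continuous on the compact set $\graphbar$, $q$-harmonic on $\graph$, and vanishes on $\partial \graphbar$. If $\max_{\graphbar} w > 0$ it is attained at an interior point, so \lemref{maxprin} forces $w$ to be constant, contradicting $w = 0$ on the nonempty set $\partial \graphbar$. Hence $w \le 0$, and applying the same argument to $-w$ (again $q$-harmonic, since $q \ge 0$) gives $w \equiv 0$. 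The same computation yields the estimate used repeatedly below: any $w$ continuous on $\graphbar$ and $q$-harmonic on $\graph$ satisfies $\sup_{\graphbar}|w| \le \sup_{\partial \graphbar}|w|$.

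For a function $\phi \in \alg$ I would solve the Dirichlet problem with data $U = \phi|_{\partial \graphbar}$ by writing $u = \phi - v$ and choosing $v$ to correct the failure of $\phi$ to be $q$-harmonic; the natural candidate is $v = \dop _0^{-1}(L_{\omega}\phi)$. This requires $L_{\omega}\phi \in L^2_{\omega}$: since $\phi$ is eventually flat, $\phi''$ is bounded and supported on finitely many edges, contributing a finite $L^2_{\omega}$ term, while $\| \omega ^{-1}q\phi \|_{\omega}^2 \le \| \phi \|_{\infty}^2 \int_{\graph}(\omega ^{-1}q)^2 \omega < \infty$ by hypothesis \eqref{qbnd}. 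By \propref{lowbnd} the operator $\dop _0$ has a strictly positive lower bound, hence is boundedly invertible, so $v$ is well defined and lies in the closure of $\domain _0$ in the norm $\| \cdot \|_{\omega ,1}$. The estimate \eqref{Lipest} shows $v$ extends continuously to $\graphbar$; as a uniform limit of functions in $\domain _0$, each of which has support in finitely many edges and vanishes near boundary vertices, hence extends by $0$ to all of $\partial \graphbar$, the extension of $v$ vanishes on $\partial \graphbar$. Then $u = \phi - v$ is continuous on $\graphbar$, equals $U$ on $\partial \graphbar$, and satisfies $L_{\omega}u = L_{\omega}\phi - \dop _0 v = 0$ with the vertex conditions \eqref{intvc} encoded in the operator domain, so $u$ is $q$-harmonic.

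To reach an arbitrary continuous $U:\partial \graphbar \to \real$, I would extend $U$ to a continuous function on $\graphbar$ by Tietze and approximate it uniformly by elements of $\alg$, using the density of $\alg$ in $C(\graphbar)$ furnished by \lemref{testfunks} together with Stone--Weierstrass; this produces $\phi_n \in \alg$ with $\phi_n|_{\partial \graphbar} \to U$ uniformly. Let $u_n$ solve the Dirichlet problem for $\phi_n|_{\partial \graphbar}$ as above. Since $u_n - u_m$ is $q$-harmonic with boundary data $\phi_n - \phi_m$, the maximum-principle estimate gives $\sup_{\graphbar}|u_n - u_m| \le \sup_{\partial \graphbar}|\phi_n - \phi_m| \to 0$, so $(u_n)$ converges uniformly on $\graphbar$ to a continuous limit $u$ with $u = U$ on $\partial \graphbar$.

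The main obstacle is the final step: showing this uniform limit is itself $q$-harmonic. On each edge, identified with $[a,b]$, the functions $u_n$ satisfy $u_n'(x) = u_n'(y) + \int_y^x q\,u_n$. Choosing $y$ where $u_n'$ equals the mean slope $(u_n(b) - u_n(a))/(b-a)$ and using the uniform sup bound on the $u_n$ together with the local integrability of $q$, one obtains uniform convergence of $u_n'$ on compact subintervals, hence $C^1_{\mathrm{loc}}$ convergence on edge interiors. Passing to the limit in the integral equation gives $u'' = qu$ a.e., while $C^1$ convergence up to each interior vertex preserves the continuity and derivative conditions \eqref{intvc}. The delicate point throughout is that $q$ is only locally integrable and may be unbounded near vertices, so every derivative estimate must be organized around the integral form of the equation rather than pointwise bounds on $q$; this is precisely where the hypothesis \eqref{qbnd} controlling $\omega ^{-1}q$ and the eventually flat structure of $\alg$ do their work.
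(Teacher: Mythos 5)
Your proposal is correct and takes essentially the same route as the paper: your corrector $v = \dop _0^{-1}(L_{\omega }\phi )$ is exactly the paper's $h_n = \dop _0^{-1}\omega ^{-1}[D^2 - q]g_n$ (up to sign), and the scheme of approximating the boundary data from $\alg $, using \propref{lowbnd} to invert $\dop _0$, and passing to a uniform limit via the maximum principle is identical. The only variations are cosmetic: you derive the uniform $C^1$ convergence on edges by a mean-value choice of base point in the integral equation where the paper invokes equivalence of norms on the two-dimensional solution space, and you spell out the uniqueness argument and the vanishing of $v$ on $\partial \graphbar $, which the paper leaves implicit.
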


\begin{proof}
As noted in the remarks preceeding \lemref{testfunks}, $\alg $ is uniformly 
dense in the real continuous functions on $\graphbar $.
After extending $U$ to a continuous function on $\graphbar $, find a function $g_n \in \alg$ with
\[\max_{x \in\graphbar} |U(x) - g_n(x)| \le 1/n.\]
Since $g_n \in \alg $, the function $D^2g_n$ is bounded with support contained in the union of
a finite collection of edges.  Since $g_n$ is bounded, \eqref{qbnd} implies that
\[ \omega  ^{-1}[D^2 - q]g_n \in L^2(\omega  ).\]
The operator $\dop _0$ is invertible, so the function 
\begin{equation} \label{hdef}
h_n = \dop _0^{-1} \omega  ^{-1}[D^2 - q]g_n 
\end{equation}  
satisfies
\[\omega  ^{-1}[-D^2 + q]h_n = \omega  ^{-1}[D^2 - q]g_n. \]  
Since $g_n \in \alg$ and $h_n$ is in the domain of $\dop _0$,
$u_n = h_n + g_n$ satisfies the vertex conditions \eqref{intvc}.
In addition $h_n(x) = 0$ for $x \in \partial \graphbar $.
That is, $u_n$ is $q$-harmonic on $\graph $ with 
$u_n(x) = g_n(x)$ for $x \in \partial \graphbar $.

Suppose $m < n$.  Then 
\[\max_{x \in \partial \graphbar } |u_m - u_n| \le \frac{2}{m}.\]
Since both $\pm (u_m - u_n)$ are $q$-harmonic on $\graph $, 
the maximum principle implies 
\[\max_{x \in \graphbar } |u_m - u_n| \le \frac{2}{m},\]
so $\{ u_n \} $ is a uniformly Cauchy sequence on $\graphbar $,
with a continuous limit $u$.

On each edge $e$ the solutions of $u'' = qu$ form a two dimensional vector space.
Recall \cite[pp. 4-6]{Kato} that any two norms on a finite dimensional vector
space induce the same topology, so 
the uniform convergence to $0$ of $|u_m(x) - u_n(x)|$ for $x \in e$ implies   
the uniform convergence to $0$ of $|u_m(x) - u_n(x)| + |u_m'(x) - u_n'(x)| $.
Recasting solutions of $u'' = qu$ as solutions of an integral equation
and taking limits shows that $u(x) = \lim_n u_n(x)$ is the desired $q$-harmonic function.   

\end{proof}

\section{Introducing the Dirchlet-Neumann map}

Beginning with finite quantum graphs, where classical derivatives at the boundary are available, 
this section introduces the Dirichlet to Neumann map. 
Sample computations for a family of infinite $\alpha - \beta $ trees demonstrate 
the complex behavior possible for derivatives of harmonic functions on infinite graphs.
For continuous boundary functions $U$ and clopen sets $\Omega \subset \partial \graphbar$,  
a preliminary Dirichlet to Neumann function $\Lambda _q(U,\Omega )$ is defined using test functions
from $\alg $.  Since the existence of $q$-harmonic extensions is fundamental, the hypotheses of \thmref{Dprob}
are assumed.  Moreover, $q$ is subsequently assumed to be integrable over $\graph $.

\subsection{Finite Graphs }

In case $\graph $ is a finite connected metric graph,    
$\partial \graph $ is just the set of vertices of degree one.
The hypotheses of \thmref{Dprob} are satisfied if $\omega  (e) = 1$ for each edge $e$, so
for any function $U:\partial \graph \to \real $ there is a unique $q$ - harmonic
extension $u:\graph \to \real $.  The function $u$ will have an absolutely continuous derivative.
For each boundary vertex $v$, let $\partial _{\nu}$ denote the 
derivative computed with respect to local coordinates pointing outward at $v$.
The Dirichlet to Neumann map $\Lambda _q$ acting on the vector space of
functions on $\partial \graph $ is defined by
\[\Lambda _q U(v) = \partial _{\nu} u(v), \quad v \in \partial \graph .\]

For a finite graph, introduce the Hilbert space $l^2(\partial \graph )$
of functions $F: \partial \graph \to \real$, 
with the usual inner product 
\[ \langle F,G \rangle _{\partial \graph} 
= \sum_{v \in \partial \graph} F(v) G(v).\]
Extending functions $F: \partial \graphbar \to \real $ to smooth functions $f:\graph \to \real $ 
satisfying the standard interior vertex conditions \eqref{intvc}, integration by parts gives
\begin{equation} \label{IBP1}
 \int_{\graph} (-f'' + qf) g = 
 - \sum_{v \in \partial \graph} (\partial _{\nu}f(v)) g(v)
+ \int_{\graph} f'g' + qfg.
\end{equation}
With $Lf = -f'' + qf$, a second integration gives
\begin{equation} \label{IBP2}
\int_{\graph} (Lf) g
= \sum_{v \in \partial \graph} [f(v)\partial _{\nu}g(v) - g(v)\partial _{\nu}f(v)]
+ \int_{\graph} f (Lg).
\end{equation}

If $Lf = 0$ and $g = 1$, \eqref{IBP2} reduces to 
\begin{equation} \label{currcon}
\sum_{v \in \partial \graph} \partial _{\nu}f(v) = \int_{\graph} f (Lg).= \int_{\graph} f q.
\end{equation}
When $q = 0$ this equation corresponds to 'conservation of current'.  
In case $Lf = Lg = 0$, \eqref{IBP2} reduces to 
\[  \langle \Lambda _q F,G \rangle _{\partial \graph}  
= \sum_{v \in \partial \graph} g(v)\partial _{\nu} f(v) 
= \sum_{v \in \partial \graph} f(v)\partial _{\nu}g(v) 
=\langle F,\Lambda _q G \rangle _{\partial \graph}  . \]
If, in addition, $G = F$, then \eqref{IBP1} gives
\begin{equation} \label{DNform}
\langle \Lambda _q F,F \rangle _{\partial \graph}  = \int_{\graph} (f')^2 + qf^2.
\end{equation}
Finally, if $\Lambda _qF = 0$, then \eqref{DNform} with
$q \ge 0$ shows that $f' = 0$ everywhere,
which implies $f$ is constant on $\graph $, and $q = 0$ almost everywhere.  
These basic computations are summarized in the following proposition.

\begin{prop} \label{fingcase}
For a finite graph $\graph $ with nonempty boundary $\partial \graph $, 
the Dirichlet to Neumann map $\Lambda _q$ is self adjoint and nonnegative on $l^2(\partial \graph )$.
The null space of $\Lambda _q$ is $\{0\}$ unless $q = 0$ almost everywhere, in which case the null space 
consists of the constants. 
\end{prop}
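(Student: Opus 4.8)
The plan is to read off the conclusions from the integration-by-parts identities \eqref{IBP1}--\eqref{DNform} recorded above, after first checking that $\Lambda _q$ is a genuine linear operator on the finite-dimensional space $l^2(\partial \graph )$. By \thmref{Dprob} (whose hypotheses hold with $\omega \equiv 1$ on a finite graph), each $F:\partial \graph \to \real$ has a unique $q$-harmonic extension $f$, and because the equation $Lf = -f'' + qf = 0$, the interior vertex conditions \eqref{intvc}, and the boundary values are all linear in $f$, the extension map $F \mapsto f$ and hence $\Lambda _q: F \mapsto (\partial _\nu f)|_{\partial \graph}$ are linear. On the finite-dimensional space $l^2(\partial \graph )$ self adjointness is the same as symmetry of the associated matrix, and this is precisely the content of \eqref{IBP2} specialized to $Lf = Lg = 0$: writing $f,g$ for the $q$-harmonic extensions of $F,G$, the bulk term $\int_\graph f(Lg)$ vanishes and the boundary sum yields $\langle \Lambda _q F, G\rangle _{\partial \graph} = \langle F, \Lambda _q G\rangle _{\partial \graph}$.

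Nonnegativity is immediate from \eqref{DNform}: taking $G = F$ gives $\langle \Lambda _q F, F\rangle _{\partial \graph} = \int_\graph (f')^2 + qf^2$, which is nonnegative since $q \ge 0$. For the null space, suppose $\Lambda _q F = 0$. Then $\langle \Lambda _q F, F\rangle _{\partial \graph} = 0$, so the nonnegative integrand forces $(f')^2 = 0$ and $qf^2 = 0$ almost everywhere on $\graph$. Thus $f' = 0$ on every edge; continuity of $f$ together with the connectivity of $\graph$ then makes $f$ constant, say $f \equiv c$, and in particular $F \equiv c$ on $\partial \graph$.

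Finally I would split into two cases using $qf^2 = c^2 q = 0$ almost everywhere. If $q$ is not zero almost everywhere, then $c = 0$, so $F = 0$ and the null space is $\{0\}$. If $q = 0$ almost everywhere, then every constant $c$ yields $f \equiv c$ with $f' = 0$, hence $Lf = 0$ and $\partial _\nu f = 0$ at each boundary vertex, so every constant lies in the null space; conversely the argument above shows any null vector is constant, so the null space is exactly the constants. The computations being already in hand, there is no serious obstacle; the only points demanding care are the well-definedness and linearity of $\Lambda _q$ (which rest on the uniqueness in \thmref{Dprob}) and the clean dichotomy in the null space analysis, where one must use that $q \ge 0$ to pass from $\int_\graph (f')^2 + qf^2 = 0$ to the vanishing of each term separately.
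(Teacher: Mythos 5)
Your proposal is correct and follows essentially the same route as the paper, which likewise reads off self adjointness from \eqref{IBP2} with $Lf = Lg = 0$, nonnegativity from \eqref{DNform}, and the null space dichotomy from $\int_{\graph}(f')^2 + qf^2 = 0$ forcing $f' = 0$ and $qf^2 = 0$. Your explicit verification that $\Lambda_q$ is well defined and linear via the uniqueness in \thmref{Dprob} is a point the paper leaves implicit, but it is the same argument in substance.
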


Using the $q$-harmonic extensions $f$ of $F$ and $g$ of $G$, \eqref{IBP1} expresses the bilinear form 
$\langle \Lambda _qF,G\rangle _{\partial \graph } $ as an integral over $\graph $,
\[ \langle \Lambda _qF,G\rangle _{\partial \graph } =  \int_{\graph} f'g' + qfg.\]
It is also possible to express $\Lambda _qF$ using test functions in \eqref{IBP2}.
Let $\phi (x)$ be a smooth function with $\phi _e'(w) = 0$ for
all edges $e$ and vertices $w \in \graph $.  If $\phi (v) = 1$ at the boundary
vertex $v$, while $\phi (w) = 0$ at all other boundary vertices, then \eqref{IBP2} gives
\begin{equation} \label{Fweakform}
\Lambda _qF(v) = \partial _{\nu}f(v) =  \int_{\graph} f L \phi .
\end{equation}

\subsection{Infinite network example: the $\alpha -\beta $ tree}

Since the derivatives of harmonic functions on infinite graphs can exhibit complex behavior,
extending the Dirichlet to Neumann map from finite to infinite networks will require new ideas.
To illustrate this complexity, consider an infinite tree $\tree _{\alpha }$, as illustrated in Figure \ref{fig:abtree}.
$\tree _{\alpha }$ has a root vertex $v_0$ of degree $1$ with adjacent vertex $v_1$. All
vertices except $v_0$ have degree $3$.  Organize the vertices into combinatorial depth levels, so $v_0$
is at level $0$, $v_1$ is at level $1$, and if $v_n$ is at level $n$, its children are at level $n+1$.

\begin{figure}[htp]
\begin{center}
\includegraphics[width=3in]{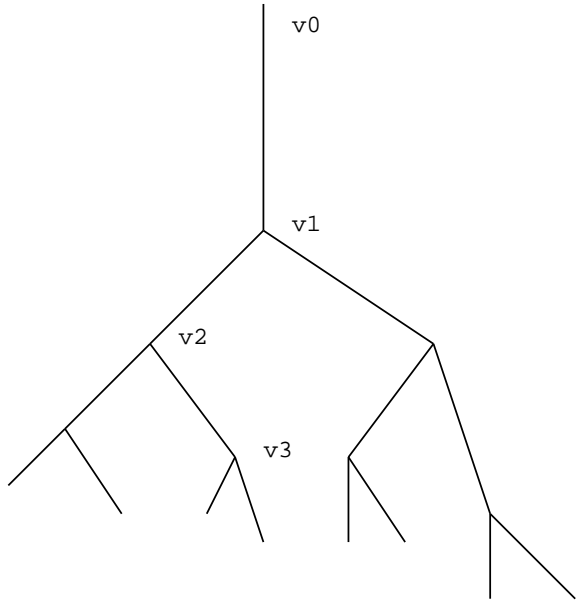}
\caption{An $\alpha - \beta$ tree} 
\label{fig:abtree}
\end{center}
\end{figure}

The edge weights are described by a scaling parameter $\alpha $, where 
$\alpha $ and $\beta = 1-\alpha $ satisfy $0 < \alpha , \beta < 1 $.
For any vertex $v \in \tree $ other than $v_0$, label the single incident parent edge $e_p(v)$, 
and the two child edges $e_{\alpha }(v)$ and $e_{\beta }(v)$, with lengths $l_p$, $l_{\alpha }$ and $l_{\beta} $ respectively.  
The lengths satisfy $l_{\alpha } = \alpha l_p$, $l_{\beta } = \beta l_p$.

Given a linear function $u(x) = cx + d$ on the edge $[v_0,v_1]$, extend $u$ to $\tree _{\alpha }$
by requiring $u$ to be continuous on $\tree _{\alpha }$ and linear on each edge, with
\[u'(x) = \frac{-\beta }{\alpha + \beta }u'(t), \quad t \in e_p(v) 
, \quad x \in e_{\alpha}(v),\] 
\[u'(x) = \frac{-\alpha }{\alpha + \beta }u'(t), \quad t \in e_p(v) 
, \quad x \in e_{\beta}(v).\] 
The derivatives are computed in local coordinates pointing outward from $v$..
One checks easily that $u$ is harmonic on $\tree _\alpha $, 
has the same value at all vertices on the same level of $\tree $,  
and has a finite limit as the level index $n \to \infty$.

Paths descending through the levels are determined by sequences $\{ \sigma _n \}$ with
$\sigma _n \in \{ \alpha , \beta \}$.  The derivative below the vertex $v_N$ at level $N$
will be   
\[u'(v_N^+) = u'(v_1^-)\prod_{n=1}^N \frac{\alpha \beta }{\sigma _n}, \quad u'(v_1^-) = c.\]   
Note that $\lim_{N \to \infty} u'(v_N^+) = 0$ independent of the path, but the decay behaviors
are path dependent and diverse.     

\subsection{Test function formulation}

As noted in \eqref{Fweakform}, there is a test function formulation
of the Dirichlet-Neumann map for finite graphs.  For infinite graphs
the points in $\partial \graphbar$ are generally not isolated, so
the previous formulation is not applicable.
An alternative is available when $\graphbar $ is compact and weakly connected,
with functions in $\alg$ serving as test functions and clopen 
subsets of $\partial \graphbar $ generalizing boundary vertices.  

The set $\partial \graphbar$ is a closed subset of $\graphbar$, 
and $\partial \graphbar$ is totally disconnected if $\graph $ is weakly connected.
Since $\partial \graphbar$ is a totally disconnected compact metric space,
every open ball $B_{\epsilon }(x)$ contains a clopen neighborhood of $x$.
Applying \lemref{testfunks}, the next lemma guarantees a plentiful supply
suitable test functions.  The succeeding lemma describes their features. 

\begin{lem} \label{clopenfunk}
Suppose $\graphbar $ is compact and weakly connected.  If $\Omega $ is
a clopen subset of $\partial \graphbar$, then there is a function $\phi \in \alg $
such that $0 \le \phi \le 1$ and
\[\phi (x) = 1, \quad x \in \Omega , \quad \phi (y) = 0, 
\quad y \in \partial \graphbar \setminus \Omega .\] 
\end{lem}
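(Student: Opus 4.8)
The plan is to reduce this directly to \lemref{testfunks}, whose hypotheses ask only for two disjoint compact subsets of $\graphbar $. The natural candidates are $\Omega $ itself and its relative complement $\Omega _1 = \partial \graphbar \setminus \Omega $. These are disjoint, and their union is all of $\partial \graphbar $, so the function $\phi $ produced by \lemref{testfunks} will equal $1$ on $\Omega $ and $0$ on $\partial \graphbar \setminus \Omega $, with $0 \le \phi \le 1$ and $\phi \in \alg $, which is exactly the assertion to be proved.

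The one thing requiring verification is compactness of both sets. Since $\graphbar $ is compact and $\partial \graphbar $ is closed in $\graphbar $, the boundary $\partial \graphbar $ is itself compact. Because $\Omega $ is clopen in $\partial \graphbar $, it is in particular closed in $\partial \graphbar $, hence a closed subset of the compact space $\graphbar $, and therefore compact. The complement of a clopen set is clopen, so $\Omega _1$ is likewise closed in $\partial \graphbar $ and compact by the same reasoning.

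With $\Omega $ and $\Omega _1$ in hand as disjoint compact subsets of $\graphbar $, I would invoke \lemref{testfunks}, which applies since $\graphbar $ is weakly connected, to obtain the desired $\phi \in \alg $ with $0 \le \phi \le 1$, $\phi = 1$ on $\Omega $, and $\phi = 0$ on $\Omega _1 = \partial \graphbar \setminus \Omega $. The degenerate cases $\Omega = \emptyset $ and $\Omega = \partial \graphbar $ are covered either by the constant functions $0$ and $1$ in $\alg $, or directly by allowing one of the two compact sets to be empty in \lemref{testfunks}.

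There is essentially no obstacle here beyond the compactness check; the entire content of the lemma is the observation that the clopen hypothesis is precisely what guarantees that \emph{both} $\Omega $ and its relative complement are closed, so that \lemref{testfunks} becomes applicable. The role of this statement is less as an independent theorem than as the bridge that lets total disconnectedness of $\partial \graphbar $, which supplies clopen sets in abundance, be converted into a usable supply of test functions in $\alg $.
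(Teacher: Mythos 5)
Your proof is correct and is essentially the paper's own argument: the paper obtains \lemref{clopenfunk} by directly applying \lemref{testfunks} to the disjoint compact sets $\Omega$ and $\partial \graphbar \setminus \Omega$, with compactness following, exactly as you verify, from the clopen hypothesis together with $\partial \graphbar$ being a closed subset of the compact space $\graphbar$. Your write-up is in fact more explicit than the paper's one-line invocation, and the compactness check you supply is the only substantive step.
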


\begin{lem} \label{bndeq}
If $\phi \in \alg$, then the restriction $\phi :\partial \graphbar \to \real$
has finite range $R$. The set $\{ x | \phi (x) \notin R \}$ is contained in the union
of finitely many edges.
\end{lem}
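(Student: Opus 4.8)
The plan is to exploit the eventual flatness of $\phi$. By the definition of $\alg$, there is a finite set of edges $S = \{e_1,\dots,e_n\}$ with $\phi'(x) = 0$ for every $x$ outside $S$. First I would delete the open edges of $S$ from $\graph$ and examine the resulting graph $\graph \setminus S$ (retaining all vertices). A standard combinatorial fact—removing $k$ edges from a connected graph raises the number of connected components by at most $k$, and this argument is purely local, so it is valid for infinite graphs—shows that $\graph \setminus S$ has at most $n+1$ connected components $C_1,\dots,C_m$. On each $C_i$ every edge lies outside $S$, so $\phi' = 0$ there; continuity across the interior vertices of $C_i$ then forces $\phi$ to equal a single constant $c_i$ on all of $C_i$. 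Thus, away from the finitely many edges of $S$, the function $\phi$ assumes at most $n+1$ values.

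To control $\phi$ on $\partial \graphbar$ I would treat its two kinds of points separately. Every boundary vertex of $\graph$ lies in some $C_i$, so $\phi$ takes one of the values $c_1,\dots,c_m$ there. For an ideal point $\xi \in \graphbar \setminus \graph$, choose a Cauchy sequence $x_k \to \xi$ in $\graph$. Since the union of the edges in $S$ is a finite union of compact intervals, its closure is a compact, hence closed, subset of $\graphbar$ contained in $\graph$; as $\xi$ lies outside it, $\xi$ is bounded away from $S$, so $x_k$ eventually avoids $S$ and falls into one of the components $C_i$, giving $\phi(x_k) \in \{c_1,\dots,c_m\}$. A convergent sequence valued in a finite set is eventually constant, so $\phi(\xi) = \lim_k \phi(x_k) \in \{c_1,\dots,c_m\}$. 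Hence the range $R = \phi(\partial \graphbar)$ is contained in $\{c_1,\dots,c_m\}$ and is finite, which establishes the first assertion.

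For the second assertion I would locate the set where $\phi \notin R$. Off $S$ one has $\phi \equiv c_i$ on $C_i$, so the only way to have $\phi(x) \notin R$ away from $S$ is for $x$ to sit in a component $C_i$ whose constant $c_i$ is not realized on $\partial \graphbar$. I claim every such $C_i$ is finite. Indeed, if $C_i$ contained infinitely many edges, selecting one interior point from each would produce a sequence in $C_i$; by compactness of $\graphbar$ it has a limit point $\eta$, which cannot lie in $\graph$, since local finiteness of $\graph$ permits only finitely many edges near any point of $\graph$. Therefore $\eta \in \graphbar \setminus \graph \subseteq \partial \graphbar$, and continuity gives $\phi(\eta) = c_i \in R$. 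Consequently the components on which $\phi \notin R$ are finite, and as there are at most $n+1$ components in all, their union lies in finitely many edges; together with the edges of $S$ this confines $\{x \mid \phi(x) \notin R\}$ to a finite union of edges.

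The hard part is the handling of the ideal boundary in both halves: in the first assertion, showing that $\xi$ is separated from the compact set $\bigcup S$ so the approximating sequence is eventually trapped in the finitely many components, and in the second, showing that an infinite component necessarily accumulates at an ideal point so that its value is recorded in $R$. Once these two topological observations are secured, the combinatorial bound on the number of components and the constancy of $\phi$ on each component make the remainder routine.
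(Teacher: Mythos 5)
Your proof is correct, but it takes a genuinely different route from the paper's. The paper argues pointwise along paths: for each boundary point $z$ off the exceptional edge set $E$ (a finite set of closed edges containing all points with $\phi'\not=0$), it invokes a path-existence result from \cite{Carlson08} to get a path $\gamma$ from $E$ to $z$ lying in $\graph$ except at its endpoint, takes the last exit time $t_0$ from $E$, and concludes $\phi(z)=\phi(\gamma(t_0))$, a value of $\phi$ at one of the finitely many endpoints of edges of $E$; finiteness of $R$ is immediate. You instead delete the open edges of $S$, bound the number of components of $\graph\setminus S$ by $n+1$ via the standard edge-removal count, note that $\phi$ is constant on each component, and treat ideal boundary points by separating them metrically from the compact set $\bigcup S$. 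What your approach buys is self-containment (no appeal to the cited path proposition) at the cost of some component bookkeeping; what it also buys, notably, is a more careful second assertion. The paper's proof ends by asserting $\{x \mid \phi(x)\notin R\}\subset E$, which as literally stated can fail: a finite ``dead-end'' component of $\graph\setminus E$ with no boundary vertices (e.g., a triangle attached to the rest of $\graph$ only through edges of $E$) can carry a constant value of $\phi$ never attained on $\partial\graphbar$, and that component need not lie in $E$. Your accumulation argument---an infinite component must cluster at a point of $\graphbar\setminus\graph\subset\partial\graphbar$, forcing its constant into $R$, so the offending components are finite in size and in number---supplies exactly the step needed to confine the exceptional set to finitely many edges, which is all the lemma claims. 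One point you should make explicit: the claim that infinitely many distinct edges cannot accumulate at a point $p\in\graph$ uses the geodesic metric, not merely combinatorial local finiteness; concretely, every point within distance $\epsilon$ of $p$, for $\epsilon$ smaller than the minimum of the finitely many relevant positive edge (half-)lengths at $p$, must lie on an edge containing or incident to $p$, so the limit point of your chosen sequence indeed lies in $\graphbar\setminus\graph$.
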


\begin{proof}
Let $E$ be a nonempty finite set of closed edges containing all points $x$ with $\phi ' (x) \not= 0$.
For any point $z \in \partial \graphbar \setminus E$ there is a path 
$\gamma :[0,1] \to \graphbar$ with $\gamma (0) \in E$,  $\gamma (1) = z$, and $\gamma (t) \in \graph $  
for $0 < t < 1$ by \cite[Prop 2.3]{Carlson08}. Since $d(z,E) > 0$ there is a smallest number $t_0$
with $0 \le t_0 < 1$, and such that $\gamma (t) \notin E$ for all $t > t_0$.
Since $E$ is the union of a finite set of closed edges, $\gamma (t_0)$ is an endpoint
of one of these edges, and $\phi (z) = \phi (\gamma (t_0))$, since $\phi '(\gamma (t)) = 0$
for all $t > t_0$. This shows that       
the restriction $\phi :\partial \graphbar \to \real$
has finite range $R$, and that $\{ x | \phi (x) \notin R \} \subset E$.
\end{proof}

Assume the hypotheses of \thmref{Dprob} are satisfied.
Before treating the Dirichlet to Neumann map, a preliminary step 
is to consider the real valued Dirichlet-Neumann function $\Lambda _q(U,\Omega)$,
defined for continuous functions $U:\partial \graphbar \to \real$
and clopen sets $\Omega \subset \partial \graphbar $.
The value of $\Lambda _q(U,\Omega)$ plays the role of the integral of the normal derivative of the $q$-harmonic 
extension $u$ over set $\Omega \subset \partial \graphbar $.
Using \eqref{Fweakform} as a guide, suppose $\phi \in \alg$ satisfies 
\[\phi (x) = 1, \quad x \in \Omega , \quad \phi (x) = 0, 
\quad x \in \partial \graphbar \setminus \Omega .\] 
The Dirichlet-Neumann function $\Lambda _q(U,\Omega)$ is defined by the formula
\begin{equation} \label{DNmeas}
\Lambda _q(U, \Omega ) = \int_{\graph } u L \phi ,
\end{equation}
The next lemma shows that the dependence on $\phi $ is illusory.

\begin{lem} \label{wellpose}
Suppose $\Omega \subset \partial \graphbar $ is clopen, and that $\phi ,\psi \in \alg $ with
\[\phi (x) = \psi (x) = 1, \quad x \in \Omega , \quad \phi (x) = \psi (x) = 0, 
\quad x \in \partial \graphbar \setminus \Omega .\] 
Then
\[\int_{\graph } u L \phi  = \int_{\graph } u L \psi .\]
\end{lem}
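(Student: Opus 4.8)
The plan is to reduce the identity to a finite computation by exploiting the fact that the difference $w = \phi - \psi$ vanishes everywhere on $\partial \graphbar$. Since $\phi$ and $\psi$ agree on both $\Omega$ and $\partial \graphbar \setminus \Omega$, the function $w = \phi - \psi$ lies in $\alg$ and satisfies $w(x) = 0$ for every $x \in \partial \graphbar$. Its restriction to $\partial \graphbar$ then has range $R = \{0\}$, so \lemref{bndeq} shows that $\{x \in \graphbar : w(x) \neq 0\}$ is contained in the union of finitely many edges; that is, $w$ is supported on a finite subgraph $E_0$. Because $w \equiv 0$ off $E_0$ (hence $w'' = 0$ and $qw = 0$ there), the claim becomes $\int_{\graph} u L w = \int_{E_0} u L w = 0$, a finite sum of integrals over individual edges.

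On each edge $e \subset E_0$ I would integrate by parts twice. Writing $L w = -w'' + qw$ and using that $u$ is $q$-harmonic, so that $u'$ is absolutely continuous and $u'' = q u$ holds a.e., the bulk term collapses:
\[\int_e u(-w'' + qw) = [u'w - uw']_{\partial e} + \int_e(-u'' + qu)w = [u'w - uw']_{\partial e}.\]
Summing over the finitely many edges of $E_0$ and regrouping the endpoint contributions by vertex, using outward-pointing local coordinates at each vertex $v$, I expect the total to take the form
\[\sum_v \left[ w(v)\sum_{e \sim v} \partial _{\nu} u(v) - u(v) \sum_{e \sim v} \partial _{\nu} w(v)\right],\]
where for each $v$ the inner sums run over the edges of $E_0$ incident to $v$.

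The remaining task is to show that every vertex term vanishes. The second sum is handled immediately: since $w \in \alg$ is eventually flat, $w' \equiv 0$ in a neighborhood of each vertex, so $\partial _{\nu} w(v) = 0$ for every incident edge, killing the $u(v)\sum_{e \sim v}\partial _{\nu} w(v)$ contributions. For the first sum I would argue by cases on the value $w(v)$. If $w(v) = 0$ the term vanishes outright; this covers every boundary vertex as well as every vertex lying on the topological boundary of the support $E_0$, where some incident edge carries $w \equiv 0$ and continuity forces $w(v) = 0$. If instead $w(v) \neq 0$, then by continuity $w$ is nonzero near $v$, so all edges incident to $v$ belong to $E_0$ and $v$ must be an interior vertex; the quantum graph vertex condition $\sum_{e \sim v} \partial _{\nu} u(v) = 0$ for the $q$-harmonic $u$ then makes the term vanish. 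Combining these gives $\int_{\graph} u L \phi = \int_{\graph} u L \psi$.

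The step I expect to be most delicate is the reduction in the first paragraph: confirming via \lemref{bndeq} that $w$ genuinely has support in finitely many edges, so that the integrals converge and the term-by-term double integration by parts is legitimate, together with the careful bookkeeping at the vertices on the boundary of the support $E_0$, where the interplay between the vanishing of $w$ and the partial vertex sums for $u$ must be tracked precisely.
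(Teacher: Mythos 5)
Your proof is correct, but it takes a recognizably different route from the paper's. The paper never isolates the difference $w = \phi - \psi$ as a finitely supported object; instead it applies the finite-graph integration-by-parts identity \eqref{IBP2} to $\phi$ and $\psi$ separately over an exhausting finite subgraph $\graph _1$ (with the relative notion of $\partial \graph _1$, each cut vertex treated as several degree-one boundary vertices), observes that the boundary sums $\sum_{v \in \partial \graph _1} \phi (v) \partial _{\nu} u(v)$ and $\sum_{v \in \partial \graph _1} \psi (v) \partial _{\nu} u(v)$ agree because $\phi = \psi$ and $\phi ' = \psi ' = 0$ on $\partial \graph _1$, and then bounds the tail $\int_{\graph \setminus \graph _1} uq(\phi - \psi )$ using the integrability of $q$. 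Your route --- applying \lemref{bndeq} to $w \in \alg$, whose restriction to $\partial \graphbar$ has range $\{0\}$ (this uses $\partial \graphbar \neq \emptyset$, which is a standing hypothesis via \thmref{Dprob}), to conclude that $w$ vanishes off finitely many edges, then integrating by parts twice edge-by-edge and killing every vertex term --- replaces the exhaustion-plus-tail-estimate with a single finite computation and needs no $\epsilon$-argument at all; indeed, under the paper's own construction $\phi - \psi$ already vanishes off $\graph _1$, so its closing tail estimate is strictly speaking superfluous, and your argument makes that transparent. Your vertex bookkeeping is the delicate part and you carry it out correctly: $\partial _{\nu} w(v) = 0$ always, by eventual flatness of $\alg$; when $w(v) = 0$ (which covers boundary vertices and vertices where some incident edge lies outside the support) the remaining term dies outright; and when $w(v) \neq 0$, continuity forces all incident edges into the support, $v$ is necessarily interior, and the full Kirchhoff sum $\sum_{e \sim v} \partial _{\nu} u(v) = 0$ applies. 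The only cosmetic caveat is an overall sign convention in your regrouped vertex sum, which is immaterial since the total is shown to vanish.
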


\begin{proof}
The integration by parts formula \eqref{IBP2} will be used for certain finite
subgraphs $\graph _1$ of $\graph $. The validity of this formula is based on 
the assumption that the interior vertex conditions \eqref{intvc} hold at vertices
of degree greater than one.  A subgraph $\graph _1$ may include vertices $v$ 
with degree $d_1(v)$ in $\graph _1$, but with strictly larger degree $d(v)$ in $\graph $.
To resolve this problem, the boundary $\partial \graph _1$ is understood in a relative
sense, comprising the set of vertices $v \in \graph _1$ with $d_1(v) = 1$
or with $d_1(v) < d(v)$.  In the formula \eqref{IBP2},
such a vertex $v$ is treated as $d_1(v)$ boundary vertices of degree one.

Using \lemref{bndeq} and the fact that $\phi , \psi \in \alg $, 
there is a finite connected subgraph $\graph _0 \subset \graph $
such that for all connected finite subgraphs $\graph _1$ with 
$\graph _0 \subset \graph _1 \subset \graph $ we have
\[0 = \phi '(x) = \psi '(x) = \phi (x) - \psi (x), \quad x \in \graph \setminus \graph_1,\]
and 
\[0 = \phi '(x) = \psi '(x) = \phi (x) - \psi (x), \quad x \in \partial \graph_1.\]
Since $Lu = 0$,the formula \eqref{IBP2} gives
\[\int_{\graph _1} u L\phi  = \sum_{v \in \partial \graph _1} \phi(v)\partial _{\nu}u(v) 
= \sum_{v \in \partial \graph _1} \psi(v)\partial _{\nu}u(v) 
= \int_{\graph _1} u L\psi .\]  

This equality gives  
\[|\int_{\graph } u L \phi  - u L \psi | = |\int_{\graph \setminus \graph _1} u L \phi  - u L \psi | 
= |\int_{\graph \setminus \graph _1} u q (\phi  - \psi ) |.\]
Since $u(\phi - \psi )$ is bounded and $q$ is integrable, the right hand side can be made
arbitrarily small by enlarging $\graph _1$. 
\end{proof}

Having established a Dirichlet-Neumann function $\Lambda _q(U,\Omega)$, a couple of
preliminary observations are in order.  The first concerns nontriviality. 

\begin{prop}
If $U: \partial \graphbar \to \real $ is nonnegative, continuous, and somewhere positive,
and if $q(x)$ is integrable and strictly positive, then $\Lambda _q(U,\partial \graphbar ) > 0$. 
\end{prop}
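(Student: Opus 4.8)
The plan is to evaluate the Dirichlet--Neumann function \eqref{DNmeas} with the simplest admissible test function and then to argue positivity of the resulting integral. Since $\Omega = \partial \graphbar$ is itself clopen in $\partial \graphbar$, the complement $\partial \graphbar \setminus \Omega$ is empty, and the constant function $\phi \equiv 1 \in \alg$ already meets the required condition $\phi = 1$ on $\Omega$. By \lemref{wellpose} the value of $\Lambda_q(U,\partial \graphbar)$ is independent of the admissible $\phi$ chosen, so I would use $\phi \equiv 1$. With $L\phi = -\phi'' + q\phi$ and $\phi'' = 0$ one gets $L\phi = q$, whence
\[
\Lambda_q(U,\partial \graphbar) = \int_{\graph} u\,L\phi = \int_{\graph} u\,q ,
\]
where $u$ is the $q$-harmonic extension of $U$ supplied by \thmref{Dprob}. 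As $u$ is continuous on the compact set $\graphbar$ it is bounded, and $q$ is integrable, so the integral converges. The problem thus reduces to showing $\int_{\graph} u\,q > 0$.

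First I would establish $u \ge 0$ throughout $\graphbar$ via the maximum principle. The function $v = -u$ is again $q$-harmonic on $\graph$, and $v = -U \le 0$ on $\partial \graphbar$. If $v$ were positive somewhere, then, because $\graphbar$ is compact, $v$ continuous, and $v \le 0$ on the closed set $\partial \graphbar$, its positive maximum would be attained on the complement $\graphbar \setminus \partial \graphbar$, the interior of $\graph$; \lemref{maxprin} would then force $v$ to be a positive constant, contradicting $v \le 0$ on the nonempty boundary. Hence $v \le 0$, i.e. $u \ge 0$ everywhere, so $u\,q \ge 0$ almost everywhere.

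Next I would use the hypothesis that $U$ is somewhere positive to upgrade this to strict positivity. Choose $x^* \in \partial \graphbar$ with $U(x^*) > 0$. By continuity of $u$ there is an open neighborhood $N$ of $x^*$ in $\graphbar$ on which $u > 0$. Since $\graph$ is dense in its completion $\graphbar$, the set $N \cap \graph$ is a nonempty open subset of $\graph$ and therefore contains an open subinterval of some edge, a set of positive Lebesgue measure. On this set $u > 0$, and since $q$ is strictly positive, $u\,q > 0$ there; combined with $u\,q \ge 0$ on all of $\graph$ this yields
\[
\Lambda_q(U,\partial \graphbar) = \int_{\graph} u\,q \ \ge\ \int_{N \cap \graph} u\,q \ >\ 0 .
\]

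The argument is essentially routine once the constant test function is recognized; the only step demanding genuine care is the reduction to a set of positive measure, where I must confirm that a neighborhood of a boundary point of $\graphbar$---whether an ideal point of $\graphbar \setminus \graph$ or a genuine boundary vertex---always meets $\graph$ in a set of positive measure rather than in a single point. This is guaranteed because $\graphbar \setminus \graph \subset \partial \graphbar$ and, by density of $\graph$ in $\graphbar$, every open neighborhood of a point of $\graphbar$ contains portions of edges. I expect this measure-theoretic verification, together with the careful bookkeeping of where the maximum of $v$ is attained, to be the main---though modest---obstacle.
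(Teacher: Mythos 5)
Your proposal is correct and takes essentially the same route as the paper: both choose the constant test function $\phi \equiv 1$, so that $\Lambda_q(U,\partial\graphbar) = \int_{\graph} u\,q$, and both invoke the maximum principle to get $u \ge 0$ before concluding strict positivity from $q>0$. The paper simply cites \lemref{max01} for the nonnegativity (which in fact yields $u>0$ throughout the interior when $u$ is nonconstant) and leaves the final strict inequality implicit, whereas you rederive nonnegativity directly from \lemref{maxprin} and spell out the positive-measure set where $uq>0$ --- the same argument, written in fuller detail.
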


\begin{proof}
By \lemref{max01} the $q$ harmonic extension $u$ of $U$ is everywhere nonnegative.  
For the clopen set $\Omega = \partial \graphbar $ simply 
take $\phi $ to be the constant function $1$.
Then 
\[\Lambda _q(U,\partial \graphbar ) = \int_{\graph} uq > 0 .\]
\end{proof}

For the second observation, suppose the hypotheses of \lemref{wellpose} hold.
Assume that $\{ \graph _n \}$  is an exhaustion of $\graph $ by a sequence of finite subgraphs, so that  
\[\graph _n \subset \graph _{n+1}, \quad \graph = \bigcup _n \graph _n.\]
For all $n$ sufficiently large  
\[\phi '(x) = 0, \quad \phi (x) \in \{ 0,1 \} , \quad x \in \partial \graph_n.\]
Using the relative notion of graph boundary, let $B_n = \phi ^{-1}(1) \cap \partial \graph _n $. 
Applying \eqref{IBP2} as in the proof of \lemref{wellpose} leads to an approximation 
of $\Lambda _q(U,\Omega )$ using finite graph Dirichlet to Neumann maps,
\begin{equation} \label{finiteapp}
\Lambda _q(U,\Omega ) = \int_{\graph } u L\phi  = 
\lim_{n \to \infty} \sum_{v \in B_n} \partial _{\nu}u(v) .
\end{equation}

\section{Characteristic functions}

This section focuses on the Dirichlet to Neumann function $\Lambda _q(U,\Omega _1 )$
when $U = 1_{\Omega }:\partial \graphbar \to \real$ is the characteristic function of a nonempty clopen set,
\[1_{\Omega }(x) = \Bigr \{ \begin{matrix} 
1, & x \in \Omega , \cr
0, & x \in \partial \graphbar \setminus \Omega .
\end{matrix} \Bigl \} \]
Let $u_{\Omega }$ denote the $q$-harmonic extension of $1_{\Omega}$ to $\graphbar$. 
By \lemref{max01}, if $\Omega \not= \partial \graphbar$, then
the inequality $0 < u_{\Omega }(t) < 1$ holds for all $t \in (\graphbar \setminus \partial \graphbar ) $.

For guidance, consider a finite graph $\graph $ and point $x \in \Omega $ with $u(x) = 1$.
The Mean Value Theorem together with the equation $u_{\Omega }'' = qu_{\Omega} \ge 0$
imply that $u_{\Omega}'(x) > 0$.  For $y \in (\partial \graph \setminus \Omega )$ a similar argument gives $u'(y) < 0$.
This section will establish analogous results for infinite graph completions.  

\subsection{Regular values}

To provide technical support for an upcoming argument, the notion of regular values of a function is 
adapted to functions defined on networks.
Assume that $h:\graph \to \real $ is continuous, with 
a continuous derivative on the open edges of $\graph $.
A point $x \in \graph $ is a critical point for $h$ if $x$ is a vertex
or if $h'(x) = 0$.  A number $y \in \real $ is a critical value for $h$
if $h^{-1}(y)$ contains a critical point.
Points in the range of $h$ that are not critical values are regular values.
The next result is a simple form of Sard's Theorem.

\begin{lem} \label{Sard}
The set of critical values of $h$ has Lebesgue measure $0$.
\end{lem}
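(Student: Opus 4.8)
The plan is to reduce the statement to the classical one-dimensional Sard theorem applied edge by edge, combined with the countability of the vertex set. By definition a critical value is a number $y$ for which $h^{-1}(y)$ contains a critical point, so the set of critical values is precisely $h(\{\text{critical points}\})$. The critical points are of two kinds, vertices and interior edge points where the derivative vanishes, so the set of critical values splits as $h(\vertexset) \cup \bigcup_n h(C_n)$, where $C_n = \{x \in e_n^\circ : h'(x) = 0\}$ is the set of interior critical points on the open edge $e_n^\circ$.

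First I would dispose of the vertex contribution: since $\graph$ has a countable vertex set $\vertexset$, the image $h(\vertexset)$ is a countable subset of $\real$ and hence has Lebesgue measure zero. Next, for each edge I would invoke the one-dimensional Sard theorem, namely that if $f : I \to \real$ is $C^1$ on an interval and $C = \{x : f'(x) = 0\}$, then $f(C)$ is null. Rather than merely cite this I would include the elementary proof: given $\epsilon > 0$, work on a compact subinterval $J \subset I$, and using uniform continuity of $f'$ choose $\delta$ so that the oscillation of $f'$ over any subinterval of length less than $\delta$ is smaller than $\epsilon$; partition $J$ into subintervals of length less than $\delta$, and on each subinterval that meets $C$ one has $|f'| < \epsilon$ throughout, so by the Mean Value Theorem the image of that subinterval is contained in an interval of length at most $\epsilon$ times its own length. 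Summing over the subintervals meeting $C$ shows that $f(C \cap J)$ is covered by intervals of total length at most $\epsilon |J|$; letting $\epsilon \to 0$ gives $|f(C \cap J)| = 0$, and exhausting $e_n^\circ$ by an increasing sequence of compact subintervals yields $|h(C_n)| = 0$ as a countable union of null sets.

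Finally I would assemble the pieces: the edge set $\edgeset$ is countable, so $\bigcup_n h(C_n)$ is a countable union of null sets and therefore null, and adjoining the null set $h(\vertexset)$ leaves the total set of critical values null. The only real technical care—and the step I expect to be the main obstacle—is that $h$ is assumed $C^1$ only on the \emph{open} edges, so $h'$ need not extend continuously to the endpoints. The compact-exhaustion device introduced above is exactly what sidesteps this: the uniform-continuity estimate is run on each closed subinterval $J$ strictly inside $e_n^\circ$, while the endpoints of the edges, being vertices, are already absorbed into the countable term $h(\vertexset)$ and contribute nothing further.
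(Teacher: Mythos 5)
Your proof is correct and follows essentially the same route as the paper: the countable set of vertex values is discarded, and on each edge the critical values are absorbed by an $\epsilon$-small-derivative covering estimate, summed over countably many edges. The only difference is in how that per-edge estimate is organized: where you use uniform continuity of $h'$ on a compact exhaustion plus a partition and the Mean Value Theorem, the paper works directly with the open set $U_{\epsilon} = \{x \,:\, |h'(x)| < \epsilon\}$, decomposes it into countably many disjoint open intervals inside the open edge, and bounds the image of each interval by integrating $h'$ from its midpoint---which disposes of the endpoint difficulty you flag as the main obstacle without any exhaustion, since $U_{\epsilon}$ is intrinsically open in the open edge and no extension of $h'$ to the vertices is ever required.
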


\begin{proof}
The set of vertices is countable, so the set of values $h(v)$ for
vertices $v$ has measure $0$.  There are countably
many edges, so it suffices to prove the result for a single edge $e$.
Suppose the length of $e$ is $l_e$, and $e$ is identified with the 
real interval $[0,l_e]$.

Since $h'$ is continuous, for $\epsilon > 0$ the set 
$U_{\epsilon} = \{ x| -\epsilon < h'(x) < \epsilon \}$ is open. 
$U_{\epsilon}$ may be written as the union of countably many pairwise disjoint open intervals 
$(\alpha _n,\beta _n) \subset (0,l_e)$. With $z_n = (\alpha _n + \beta )/2$ and    
$\alpha _n < x < \beta _n$,  
\[h(x) - h(z_n) = \int_{z_n}^x h'(t) \ dt .\]
Thus the image of the $n$-th interval has length at most $(\beta _n - \alpha _n)\epsilon $, implying    
$h(U_\epsilon )$ is contained in a set of measure at most $\epsilon l_e$. 
\end{proof}

\begin{lem} \label{finset}
Suppose $\graphbar $ is compact, and that $c$ is a regular value of $h$.
Assume there is no $x \in \partial \graphbar $ with $h(x) = c$.  
Then $h^{-1}(c)$ is a finite set containing no vertices.
\end{lem}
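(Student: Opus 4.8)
The plan is to show that $h^{-1}(c)$ is both closed and discrete in the compact space $\graphbar$, and that it contains no vertices; a closed discrete subset of a compact space is finite. Since $c$ is a regular value, by definition no point of $h^{-1}(c)$ is a critical point, so in particular no vertex $v$ satisfies $h(v) = c$. This immediately gives the ``no vertices'' conclusion, and it means every point of $h^{-1}(c)$ lies in the interior of some open edge, where $h$ is differentiable.

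First I would argue discreteness. Let $x \in h^{-1}(c)$. Since $x$ is not a critical point and not a vertex, $x$ lies in the interior of an edge $e$ and $h'(x) \neq 0$. By the continuity of $h'$ on the open edge, $h$ is strictly monotone in a neighborhood of $x$, so $x$ is the only preimage of $c$ in that neighborhood. Hence each point of $h^{-1}(c)$ is isolated in $\graph$, and $h^{-1}(c)$ is discrete.

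Next I would handle closedness and the passage to compactness. The set $h^{-1}(c)$ is the preimage of a closed set under the continuous extension $h:\graphbar \to \real$, hence closed in $\graphbar$. The extra hypothesis that no $x \in \partial \graphbar$ satisfies $h(x) = c$ ensures that $h^{-1}(c) \subset \graph$, i.e. the preimage stays in the interior and does not accumulate on the boundary of the completion. Combining these, $h^{-1}(c)$ is a closed subset of the compact space $\graphbar$, and it is discrete; a closed discrete subset of a compact space is finite, yielding the claim.

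The main obstacle I anticipate is making the discreteness-plus-compactness argument airtight across the \emph{whole} graph rather than edge by edge. On a single edge the strict monotonicity argument is routine, but one must rule out infinitely many preimages accumulating at a vertex or at the boundary $\partial \graphbar$. The boundary is excluded by the standing hypothesis $h(x) \neq c$ for $x \in \partial \graphbar$; accumulation at an interior vertex $v$ would force $h(v) = c$ by continuity, contradicting that $v$ is a critical point while $c$ is a regular value. I would phrase the final step as: if $h^{-1}(c)$ were infinite, compactness of $\graphbar$ would produce a limit point $x^*$, which by closedness lies in $h^{-1}(c)$; but $x^*$ cannot be isolated, contradicting discreteness. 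This compactness argument is the cleanest way to convert local isolatedness into global finiteness, so I would take care to verify that the limit point cannot escape to $\partial \graphbar$ or land on a vertex.
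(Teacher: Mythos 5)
Your proposal is correct and takes essentially the same route as the paper: both use compactness of $\graphbar$ to produce an accumulation point of $h^{-1}(c)$, exclude $\partial\graphbar$ by hypothesis and vertices by regularity, and derive a contradiction from the behavior of $h'$ on the edge containing the limit point. The only cosmetic difference is that you argue via local strict monotonicity (constant sign of $h'$ near the limit point), whereas the paper applies the Mean Value Theorem to preimages accumulating at $z$ to force $h'(z)=0$; the two steps are equivalent.
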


\begin{proof}
If $h^{-1}(c)$ were an infinite set, then by compactness there would be an infinite
sequence $\{ x_n \} \subset h^{-1}(c)$ converging to a point $z$, with $h(z) = h(x_n) = c$.
Moreover $z$ is an interior point of some edge $e$.  
By the Mean Value Theorem and the continuity of $h'$, $h'(z) = 0$,
contradicting the assumption that $c$ is a regular value for $h$.
\end{proof}

Level sets of $h$ can be used to construct subgraphs of the metric space $\graph$
with added vertices and subdivided edges.
Let $c_1,c_2$ be regular values of $h$ with $c_1 < c_2$, and assume that
$h(x) \notin [c_1,c_2]$ for all $x \in \partial \graphbar $.
Let $C =  h^{-1}(c_1) \cup h^{-1}(c_2)$, which is finite by \lemref{finset}.
Add vertices of degree two at the points of $C$,
and subdivide the corresponding edges to obtain a modified graphical
description of $\graph $. 
Let $\graph (c_1,c_2)$, the level set graph, be the set of points $x \in \graph $
such that every path from $x$ to $\partial \graphbar $ contains a point in $C$.

\begin{lem} \label{levgraph}
If $\graphbar $ is compact, then $\graph (c_1,c_2)$ is a finite graph whose  
boundary consists of points in $C$. If $v$ is a boundary vertex of $\graph (c_1,c_2)$ with $h(v) = c_2$,
then $\partial _{\nu} h(v) > 0 $, while if $h(v) = c_1$,
then $\partial _{\nu} h(v) < 0 $.  
\end{lem}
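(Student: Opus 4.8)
The plan is to give $\graph(c_1,c_2)$ a concrete description as a union of connected components of $\graph \setminus C$, then extract finiteness from compactness and the derivative signs from a one–dimensional intermediate value argument. First I would recast the defining condition: for $x \in \graph \setminus C$, the point $x$ lies in $\graph(c_1,c_2)$ exactly when the connected component $K$ of $x$ in $\graph \setminus C$ has closure $\overline{K}$ (taken in $\graphbar$) disjoint from $\partial\graphbar$, since a path from $x$ to the boundary avoiding $C$ exists precisely when $\overline{K}$ meets $\partial\graphbar$. Thus $\graph(c_1,c_2) = C \cup \bigcup K$, the union being over the ``interior'' components $K$ with $\overline{K}\cap\partial\graphbar = \emptyset$.

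For finiteness I would argue in two steps. Each interior component satisfies $\overline{K}\subset\graph$ (as $\graphbar\setminus\graph\subseteq\partial\graphbar$), so $\overline{K}$ is compact in $\graph$ and, by local finiteness, meets only finitely many edges; hence each $K$ lies in finitely many edges. Next, by \lemref{finset} the set $C$ is a finite set of non-vertex points, so removing $C$ cuts its ambient edges only two-sidedly, and at most two components of $\graph\setminus C$ accumulate at any single point of $C$. Since the topological boundary $\overline{K}\setminus K$ of an interior component lies in $C$ and is nonempty (an interior component cannot be all of the connected graph $\graph$, which does reach $\partial\graphbar$), there are at most $2|C|$ interior components. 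Finitely many interior components, each in finitely many edges, make $\graph(c_1,c_2)$ a finite graph. This same description shows $\graph(c_1,c_2)\setminus C$ is open in $\graph$, so the relative boundary of $\graph(c_1,c_2)$ is contained in $C$, which is the second assertion.

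For the derivative signs I would work locally at a relative boundary vertex $v$. By \lemref{finset}, $v\in C$ is not a vertex of $\graph$, and since $c_2$ (resp.\ $c_1$) is a regular value we have $h'(v)\neq 0$; continuity of $h'$ makes $h$ strictly monotone on a neighborhood of $v$ inside its edge. The crucial observation is that every point $y$ with $c_1 < h(y) < c_2$ lies in $\graph(c_1,c_2)$: any path from $y$ to $\partial\graphbar$ terminates where $h\notin[c_1,c_2]$, so by the intermediate value theorem it meets $h^{-1}(c_1)\cup h^{-1}(c_2)=C$. Consequently the side of $v$ pointing outward from $\graph(c_1,c_2)$ cannot carry values in $(c_1,c_2)$; as $h(v)=c_2$ and $h$ is monotone and nonconstant near $v$, the outward side must satisfy $h>c_2$, giving $\partial_\nu h(v)>0$. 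The case $h(v)=c_1$ is identical with inequalities reversed, yielding $\partial_\nu h(v)<0$.

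The main obstacle, and the point I would handle most carefully, is that $\graph(c_1,c_2)$ need \emph{not} coincide with $\{x : c_1\le h(x)\le c_2\}$: a ``bump'' on which $h$ exceeds $c_2$ but which is fenced off from $\partial\graphbar$ by $C$ still belongs to $\graph(c_1,c_2)$. This is why I refuse to assert $c_1\le h\le c_2$ throughout $\graph(c_1,c_2)$ and instead extract the sign information purely locally at the boundary vertices, where the intermediate value argument forces the outward side to leave $(c_1,c_2)$ on the correct side regardless of the global behavior of $h$. The other delicate ingredient is the finiteness count, which rests essentially on compactness of $\graphbar$ (to force $\overline{K}\subset\graph$ to meet finitely many edges) together with the two-sidedness of the finite cut set $C$.
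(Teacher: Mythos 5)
Your proof is correct, and its finiteness argument takes a genuinely different route from the paper's. The paper first uses compactness of $\partial \graphbar$ and continuity of $h$ up to the boundary to produce a collar $\{x : d(x,\partial \graphbar) < \epsilon\}$ on which $h \notin [c_1,c_2]$, so no point of $\graph (c_1,c_2)$ lies in the collar; the compactness criterion \cite[Prop 2.1]{Carlson08} then confines $\graph (c_1,c_2)$ to a finite edge set $E$, and an edge-by-edge case analysis on the number of points of $C$ in each $e \in E$ (your intermediate value observations appear there as the paper's ``three observations'') shows $e \cap \graph (c_1,c_2)$ is a finite union of closed subedges with endpoints in $C$. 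You instead decompose $\graph \setminus C$ into components and count: interior components (closure missing $\partial \graphbar$) have compact closures inside $\graph $, hence meet finitely many edges, and since each has nonempty topological boundary inside the finite, two-sided cut set $C$, there are at most $2|C|$ of them. Your count is cleaner, uses $h$ only through the finiteness of $C$ and the regularity of $c_1,c_2$, and makes transparent why the relative boundary lies in $C$; the paper's collar construction, on the other hand, is exactly the form that gets reused quantitatively in the proof of \thmref{positivity}. On the derivative signs the two arguments coincide in substance: points with $h \in (c_1,c_2)$ lie in $\graph (c_1,c_2)$ by the intermediate value theorem along any path to the boundary, so the outward germ at a degree-one vertex $v$ with $h(v) = c_2$ must carry values greater than $c_2$, and $h'(v) \neq 0$ (with \lemref{finset} ruling out vertices) makes the inequality strict; your explicit refusal to identify $\graph (c_1,c_2)$ with $\{x : c_1 \le h(x) \le c_2\}$ is exactly right and matches the paper's definition-driven treatment.

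Two asserted steps deserve to be written out. First, the reverse half of your component characterization: if $\overline{K}$ meets $\partial \graphbar$ at a point $z$, membership of $z$ in the closure does not by itself yield a path from $x$ to the boundary avoiding $C$, and this direction is what your upper bound $\graph (c_1,c_2) \subseteq C \cup \bigcup K$ rests on. It can be repaired in a few lines: $C$ is a finite subset of $\graph $ disjoint from $\partial \graphbar$, so $\rho = d(z,C) > 0$; choose $x_n \in K$ with $d(x_n,z) < \rho /4$, join $x$ to $x_n$ inside the open, path-connected component $K$ (this avoids $C$), and append a path from $x_n$ to $z$ of length less than $\rho /2$, which exists by the path machinery of \cite[Prop 2.3]{Carlson08}; every point of the appended arc lies within $\rho$ of $z$ and so misses $C$. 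Second, ``each interior component lies in finitely many edges'' yields a finite \emph{graph} only after noting that $K \cap e$ has finitely many interval components, which follows because the endpoints of those intervals interior to $e$ lie in $\overline{K} \setminus K \subseteq C$, a finite set. Both repairs are routine and on par with the citations the paper itself leans on.
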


\begin{proof}
Since $\partial \graphbar $ is compact,  
there is an $\epsilon > 0$ such that $h(x) \notin [c_1,c_2]$ if $d(x,\partial \graphbar )< \epsilon$. 
This implies \cite[Prop 2.1]{Carlson08} that $\graph (c_1,c_2)$ is contained in the union 
of a finite collection $E$ of edges of the (original) graph $\graph $.
Assume that $E$ is chosen so that every edge $e \in E$ contains some point of $\graph (c_1,c_2)$.

Consider the following observations about an edge $e \in E$, identified with an interval $[a,b]$.
First, if $x_0,x_1,x_2 \in e$ with $x_0 \le x_1 \le x_2$, and $x_0,x_2 \in C$, then $x_1 \in \graph (c_1,c_2)$. 
Second, if $x_0,x_2 \in e$ with $x_0 < x_2$, and there is no point $x_1 \in C$ with $x_0 \le x_1 \le x_2$, 
then by the Intermediate Value Theorem either 
$x_0$ and $x_2$ are both in $\graph (c_1,c_2)$, or neither is. 
Third, if $x_1 \in C$, then $h'(x_1) \not= 0$ implies that an interval containing $x_1$
is a subset of $\graph (c_1,c_2)$. 

Let $N$ be the number of points of $C$ in $e$.
By considering the three cases $N=0$, $N=1$, and $N \ge 2$, one finds that 
the set $e \cap \graph (c_1,c_2)$ is a finite union of closed edges from the modified graph $\graph $
(including points of $C$ as vertices).
Consequently, $\graph (c_1,c_2)$ is a finite graph whose boundary vertices are in $C$.
The conclusions about the derivatives at boundary vertices $v$ follow from $h'(v) \not= 0$.
\end{proof}

\subsection{Boundary measures}

\begin{thm} \label{positivity}
Assume that $\graph $ is connected, and that $\graphbar $ is weakly connected and compact.
Suppose $\Omega ,E_1, E_2$ are nonempty clopen subsets of 
$ \partial \graphbar $ with $\Omega \not= \partial \graphbar $,
$E_1 \subset \Omega $, and $E_2 \subset \partial \graphbar \setminus \Omega $. 
Then
\begin{equation} \label{posneg}
\Lambda _q(1_{\Omega },E_1) > 0 ,\quad \Lambda _q(1_{\Omega },E_2) \le 0. 
\end{equation}
If $q(x)=0$ for all $x$ in some neighborhood of $\partial \graphbar $, then
$ \Lambda _q(1_{\Omega },E_2) < 0$. 
\end{thm}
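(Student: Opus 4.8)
The plan is to realize $\Lambda_q(1_{\Omega},E_1)$ as a limit of fluxes of $u := u_{\Omega}$ across finite level sets approaching $E_1$, and to use current conservation \eqref{currcon} to fix the signs of these fluxes. Throughout, recall that $u$ is $q$-harmonic with $u = 1$ on $\Omega \supset E_1$, $u = 0$ on $\partial \graphbar \setminus \Omega \supset E_2$, and $0 < u < 1$ on the interior by \lemref{max01}.

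First I would localize to $E_1$. Using \lemref{clopenfunk}, fix $\phi \in \alg$ with $\phi = 1$ on $E_1$ and $\phi = 0$ on $\partial \graphbar \setminus E_1$, and let $T$ be the finite union of edges carrying $\phi'$; since $T$ is a compact subset of the interior, $m := \max_{T} u < 1$. By \lemref{Sard} pick regular values $m < c_1 < c_2 < 1$, and note that \lemref{finset} makes each level set $\{u = c_i\}$ finite (as $u \in \{0,1\}$ on $\partial \graphbar$). Because the band $\{c_1 \le u \le c_2\}$ misses $T$, the function $\phi$ is locally constant on it, so the level set graph $\graph(c_1,c_2)$ of \lemref{levgraph} decomposes into components on which $\phi \equiv 1$ and components on which $\phi \equiv 0$; let $B$ be the union of the former, a finite graph with relative boundary in $C_1 \cup C_2$, where $C_i = \{u = c_i\} \cap \{\phi = 1\}$. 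Writing $\Phi(c) = \sum_{v \in \{u=c\}\cap\{\phi=1\}} \partial_\nu^{+} u(v)$ for the flux of $u$ in the direction of increasing $u$ (hence toward $E_1$), \lemref{levgraph} gives $\partial_\nu^{+} u > 0$ at every such point, and the level set is nonempty since $u$ rises from values below $c_1$ near $T$ to $1$ on $E_1$; thus $\Phi(c) > 0$. Applying \eqref{currcon} with $g \equiv 1$ on the finite graph $B$ yields $\Phi(c_2) - \Phi(c_1) = \int_B uq \ge 0$, so $\Phi$ is non-decreasing, and evaluating \eqref{finiteapp} along an exhaustion whose relative boundary inside $\{\phi = 1\}$ is taken to be the level set $\{u = c_n\}$ with $c_n \to 1$ identifies $\Lambda_q(1_{\Omega},E_1) = \lim_{c \to 1}\Phi(c) \ge \Phi(c_1) > 0$.

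The same scheme treats $E_2$, where $u$ is small. Take a test function $\psi$ for $E_2$ and regular values $0 < c_1 < c_2$ below the minimum of $u$ on the support of $\psi'$, so the analogous band $B'$ sits near $E_2$; set $\Psi(c) = \sum_{v \in \{u=c\}\cap\{\psi=1\}} \partial_\nu^{-} u(v)$, the flux in the direction of decreasing $u$ (toward $E_2$), which is strictly negative by \lemref{levgraph}. Current conservation gives $\Psi(c_1) - \Psi(c_2) = \int_{B'} uq \ge 0$, so $\Psi$ is non-increasing in $c$, and \eqref{finiteapp} along an exhaustion with $c_n \to 0^{+}$ identifies $\Lambda_q(1_{\Omega},E_2) = \lim_{c \to 0^{+}} \Psi(c) \le 0$, which proves \eqref{posneg}. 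For the final assertion, if $q = 0$ on $\{d(\cdot,\partial \graphbar) < \delta\}$, then for $c$ small enough the band $B'$ lies in this neighborhood, so $\int_{B'} uq = 0$ and $\Psi(c)$ is constant in $c$; being a nonempty sum of strictly negative terms, it satisfies $\Lambda_q(1_{\Omega},E_2) = \Psi(c) < 0$.

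The main obstacle I anticipate is the bookkeeping of these localized level set graphs: verifying that $B$ and its $E_2$ analogue genuinely inherit, in the relative boundary sense, the finiteness and boundary-derivative conclusions of \lemref{levgraph}, and that the level-set-adapted exhaustion is admissible, so that \eqref{finiteapp} returns exactly $\lim \Phi(c)$ (respectively $\lim \Psi(c)$) with the error term $\int_{\graph \setminus \graph_n} uq\phi$ shown to vanish using the integrability of $q$. Keeping the orientation of $\partial_\nu$ consistent across \eqref{finiteapp}, \eqref{currcon}, and \lemref{levgraph} is where sign errors would most easily creep in; the asymmetry between the strict positivity at $E_1$ and the merely nonpositive value at $E_2$ is precisely the fact that the conserved flux picks up a nonnegative contribution $\int uq$ that reinforces the outflow at the maximum $E_1$ but can erode the inflow toward the minimum $E_2$ unless $q$ vanishes there.
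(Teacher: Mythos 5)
Your overall route is the paper's own: level-set graphs built from regular values via \lemref{Sard}--\lemref{levgraph}, integration by parts/current conservation on a finite piece, the tail controlled by integrability of $q$, and signs coming from $\partial_{\nu}u \neq 0$ at regular level points; your sign asymmetry (strict at $E_1$, weak at $E_2$, strict at $E_2$ when $q$ vanishes near $\partial\graphbar$) is exactly the paper's. But one step is genuinely wrong as written: you identify $\graph(c_1,c_2)$ with the band $u^{-1}([c_1,c_2])$ when you argue that ``the band $\{c_1 \le u \le c_2\}$ misses $T$'' forces $\phi$ to be locally constant on $\graph(c_1,c_2)$. The level set graph is defined by separation from $\partial\graphbar$, not by the value of $u$, and it can strictly contain the band: since $u'' = qu \ge 0$ forbids only positive interior maxima (\lemref{maxprin}), a $q$-harmonic function can have interior local minima (a $\cosh$-type dip on an edge with $q>0$), so a component $W$ of $\{u < c_1\}$ whose closure misses $\partial\graphbar$ is swallowed whole by $\graph(c_1,c_2)$ --- every path out of $W$ first exits through $u^{-1}(c_1) \subset C$. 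Your choice $c_1 > m = \max_T u$ guarantees only $T \subset \{u < c_1\}$, not that the component of $\{u<c_1\}$ containing $T$ reaches the boundary; if it does not, then $T \subset \graph(c_1,c_2)$, $\phi$ is not locally constant there, the decomposition into $\phi\equiv 1$ and $\phi\equiv 0$ components fails, and the conservation identity $\Phi(c_2)-\Phi(c_1) = \int_B uq$ no longer matches the boundary sum over $B$. The repair is to run your argument on $u^{-1}([c_1,c_2])$ itself: the proof of \lemref{levgraph} applies verbatim to show it is a finite graph with boundary in $C$ and the same boundary-derivative signs, and it genuinely misses $T$, so $\phi$ is constant on each of its components.

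Second, the step you defer as ``bookkeeping'' is in fact the substance of the paper's proof: verifying that a level-set-adapted exhaustion is admissible for \eqref{finiteapp} --- that for large $n$ the set $\phi^{-1}(1)\cap\partial\graph_n$ is exactly $\{u=c_n\}\cap\{\phi=1\}$, with no spurious low-level boundary vertices carrying $\phi = 1$ and with outward normals oriented up the gradient, and that $\int_{\graph\setminus\graph_n}uq\phi \to 0$. The paper does this once, with a single large level-set graph $\graph_0 = \graph(\alpha,\beta)$ whose levels $\alpha$ and $\beta$ straddle the range of $u$ on a finite subgraph $\graph_\delta$ chosen so that $\graph\setminus\graph_0 \subset N(\delta)$; that traps all boundary vertices of $\graph_0$ near $\partial\graphbar$, where the behavior of $\phi$ is pinned, and sidesteps your localization headaches. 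Once one splits as in \eqref{splitcalc}, your monotone-flux layer is unnecessary: for $E_1$ the tail $\int_{\graph\setminus\graph_0}uq\phi \ge 0$ reinforces the positive boundary sum, giving strict positivity outright; for $E_2$ the tail is bounded by an arbitrary $\epsilon$, giving $\le 0$, and vanishes identically when $q = 0$ near $\partial\graphbar$, giving the strict inequality. So your physical picture --- a conserved flux reinforced at the maximum side and eroded toward the minimum side by the nonnegative source $\int uq$, via \eqref{currcon} --- is exactly right, but as written the proof has the pocket gap above, and the deferred exhaustion step is where the actual proof lives.
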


\begin{proof}
The two cases in \eqref{posneg} are similar; the $E_1$ case will be emphasized. 

By \lemref{max01}, if $u_{\Omega }$ is the nonconstant $q$-harmonic extension of $1_{\Omega}$ to $\graphbar$, then
\[0 < u_{\Omega}(x) < 1, \quad x \in \graphbar \setminus \partial \graphbar .\]
Since $\graphbar $ is connected the range of $u_{\Omega }$ is $[0,1]$.   
Suppose  $\phi \in \alg $ with
\[\phi (x) = 1, \quad x \in E_1, \quad \phi (x) = 0, 
\quad x \in \partial \graphbar \setminus E_1 .\] 

The idea of the proof is to use a finite graph $\graph _0$, constructed as in \lemref{levgraph}, 
to approximate $\graph $.
The approximating graph should include all points where $\phi '(x) \not= 0$ so that
\begin{equation} \label{splitcalc}
\Lambda _q(1_{\Omega },E_1) = \int _{\graph } u_{\Omega} L \phi 
 = \int _{\graph _0} u_{\Omega}  L \phi + \int _{\graph \setminus \graph _0} u_{\Omega}  q\phi .
\end{equation}
The integral over $\graph _0$ is expressed more transparently using \eqref{IBP2}.
The possible positivity of the last integral term accounts for the slight difference
in inequalitites for $E_1$ and $E_2$ unless $q$ vanishes near $\partial \graphbar $.

By \lemref{bndeq} there is an $\delta _1 > 0$ such that if 
$d(x, \partial \graphbar \setminus E_1) < \delta _1 $ then
$\phi (x) = 0$, while if $d(x,E_1 ) < \delta _1 $ then $\phi (x) = 1$.
The integrability of $q$ implies that for any $\epsilon > 0$ there is a $\delta > 0$
such that 
\[\int _{N(\delta  )} u_{\Omega}  q \phi < \epsilon ,\quad 
N(\delta  ) = \{ x \in \graphbar \ | \ d(x,\partial \graphbar ) < \delta \} . \] 
Choose $\delta  < \delta _1$.

Pick a finite subgraph $\graph _{\delta }$ of $\graph $ such that $\graph \setminus \graph _{\delta }$
is a subset of $N(\delta )$.  The subgraph $\graph _{\delta }$ may contain boundary vertices of $\graph $, 
but by adding a finite set of new vertices to $\graph $, and subdividing the associated edges,
$\graph _{\delta }$ can be chosen so that it contains no points of $\partial \graphbar $.
By \lemref{max01} the function $u_{\Omega }$ restricted to $\graph _{\delta }$ has a minimum $\alpha _1 $
and a maximum $\beta _1$ satisfying $0 < \alpha _1 < \beta _1 < 1$.  Now pick regular values $\alpha $ and 
$\beta $ for $u_{\Omega }$ so that $0 < \alpha _1 < \alpha $ and $\beta _1 < \beta < 1$.  
Using the previous level set graph selection, define $\graph _0 = \graph (\alpha ,\beta )$.
Every boundary vertex of $\graph _0$ is in the set $N(\delta )$.

There will be a nonempty set of boundary vertices $v$ of $\graph _0$ with $u_{\Omega}(v) = \beta$ and $\phi (v) = 1$. 
By \lemref{levgraph} we then have $\partial _{\nu} u_{\Omega }(v) > 0$.
Using \eqref{IBP2} as in \eqref{finiteapp}, 
\[\int_{\graph _0} u_{\Omega } L \phi 
= \sum_{v \in \partial \graph _0 \cap \phi ^{-1}(1)} \partial _{\nu} u_{\Omega }(v) > 0. \]
Together with  
\[0 \le \int _{\graph \setminus \graph _0} u_{\Omega}  q\phi < \epsilon ,\]
the result is established.
\end{proof}

Fix a reference clopen set $\Omega $ as in \thmref{positivity}, with the corresponding
$q$-harmonic extension $u_{\Omega}:\graphbar \to \real $ of $1_\Omega : \partial \graphbar \to \real $.   
By \thmref{positivity} the function $\Lambda _q(1_{\Omega },E)$ is positive on nonempty clopen subsets 
$E$ of $\Omega $.  Consider extending the set function $\Lambda _q(1_{\Omega },E)$ to
a positive measure on $\Omega $, and a signed measure on $\partial \graphbar $.  
The construction of a measure from a premeasure \cite[p.30]{Folland} 
is used here.  Recall that an algebra of subsets of $\graphbar $ is a nonempty collection 
closed under finite unions and complements.  The collection of clopen subsets of $\partial \graphbar $
is an algebra.  

\begin{lem} \label{additive}
Assume that $E \subset \Omega $ is clopen, 
and $\{ E_n \}$ is a countable partition of $E$ by clopen sets.
Then
\[\Lambda _q(1_{\Omega },E) = \sum_n \Lambda _q(1_{\Omega },E_n)\]
\end{lem}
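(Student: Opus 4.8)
The plan is to reduce countable additivity to finite additivity, exploiting the fact that a clopen subset of the compact space $\partial \graphbar$ cannot be partitioned into infinitely many nonempty clopen pieces.

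First I would establish finite additivity. Suppose $E = E_1 \cup E_2$ with $E_1,E_2$ disjoint nonempty clopen subsets of $\Omega $. Using \lemref{clopenfunk}, choose test functions $\phi _1,\phi _2 \in \alg $ with $\phi _i = 1$ on $E_i$ and $\phi _i = 0$ on $\partial \graphbar \setminus E_i$. Since $E_1 \subset \partial \graphbar \setminus E_2$ and $E_2 \subset \partial \graphbar \setminus E_1$, the sum $\phi _1 + \phi _2$ again lies in $\alg $, equals $1$ on $E$, and equals $0$ on $\partial \graphbar \setminus E$; that is, it is an admissible test function for $E$. Because the differential expression $L$ is linear, the defining formula \eqref{DNmeas}, together with the independence of the test function guaranteed by \lemref{wellpose}, gives
\[
\Lambda _q(1_{\Omega },E) = \int _{\graph } u_{\Omega } L(\phi _1 + \phi _2)
= \int _{\graph } u_{\Omega } L\phi _1 + \int _{\graph } u_{\Omega } L\phi _2
= \Lambda _q(1_{\Omega },E_1) + \Lambda _q(1_{\Omega },E_2).
\]
An obvious induction extends this to any finite partition. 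I would also record that $\Lambda _q(1_{\Omega },\emptyset ) = 0$, obtained by taking $\phi \equiv 0$.

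Next I would address the countable partition $\{ E_n \}$. The crux is purely topological: $E$ is clopen, hence a closed and therefore compact subset of the compact boundary $\partial \graphbar $, while each $E_n$ is clopen and in particular open. Since $\{ E_n \}$ covers the compact set $E$, it admits a finite subcover $E_{n_1},\dots ,E_{n_k}$; and since the $E_n$ are pairwise disjoint and exhaust $E$, any nonempty $E_n$ must meet that subcover and hence coincide with one of the $E_{n_j}$. Thus all but finitely many $E_n$ are empty, so the ostensibly countable sum $\sum _n \Lambda _q(1_{\Omega },E_n)$ reduces to a finite sum over the nonempty pieces (the empty ones contributing $0$), and the claimed identity follows at once from finite additivity.

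I do not expect a serious obstacle here: essentially all the content is the observation that compactness collapses the countable partition to a finite one. The point requiring the most care is the elementary bookkeeping of the finite step, namely checking that $\phi _1 + \phi _2$ genuinely remains in $\alg $ and satisfies the boundary normalization $1$ on $E$, $0$ off $E$, so that \lemref{wellpose} applies and the value $\Lambda _q(1_{\Omega },E)$ is independent of the chosen representative.
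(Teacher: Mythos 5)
Your proposal is correct and follows essentially the same route as the paper: both arguments first use compactness of the clopen set $E$ to observe that the countable partition by (cl)open sets is effectively finite, and then obtain finite additivity from the defining formula \eqref{DNmeas} by summing the test functions from \lemref{clopenfunk} and using linearity of $L$. Your version merely spells out two details the paper leaves implicit --- that disjointness plus the finite subcover forces all but finitely many $E_n$ to be empty (contributing $0$ via $\phi \equiv 0$), and that \lemref{wellpose} justifies using the summed test function --- which are welcome but not a different proof.
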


\begin{proof}
Since $\graphbar $ is compact and $E$ is closed, $E$ is compact.
The given partition is an open cover of $E$, so is finite.  

Finite additivity will now follow from \eqref{DNmeas}.  
For each set $E_n$ there is a function $\phi _n \in \alg $ such that, for $x \in \partial \graphbar$,
$\phi _n(x) = 1$ if $x \in E_n$ and $\phi _n(x) = 0$ if $x \notin E_n$.
The function $\phi = \sum_n \phi _n$ is in $\alg $, restricts to the characteristic
function of $E$ on $\partial \graphbar$, and 
\[\Lambda _q(1_{\Omega} ,E) = \int_{\graph } u_{\Omega} \dop \phi =
\int_{\graph } u_{\Omega} \dop \sum_n \phi _n 
= \sum_n \Lambda _q(1_{\Omega},E_n). \]
\end{proof}

\begin{thm} \label{mexist}
$\Lambda _q(1_\Omega , E)$ extends to a finite positive Borel measure on $\Omega$
and to a signed Borel measure on $\partial \graphbar $.
\end{thm}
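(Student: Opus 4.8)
The plan is to build both measures with the standard extension of a finite premeasure on an algebra to a Borel measure, as in \cite[p.30]{Folland}, taking the relevant algebra to be the collection of clopen subsets of $\partial \graphbar$ (noted to be an algebra just before \lemref{additive}). First I would produce the positive measure on $\Omega$. Define the set function $\mu _0(E) = \Lambda _q(1_{\Omega}, E)$ on clopen sets $E \subset \Omega$. By \thmref{positivity} this is nonnegative, with $\mu _0(\emptyset ) = 0$, and choosing a test function from \lemref{clopenfunk} in \eqref{DNmeas} shows that $\mu _0(\Omega ) = \int _{\graph} u_{\Omega} L\phi$ is finite: the integrand is bounded on the finitely many edges where $\phi ' \ne 0$, and elsewhere $L\phi = q\phi$ is dominated by the integrable function $q$. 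So $\mu _0$ is a finite nonnegative set function on the algebra of clopen subsets of $\Omega$.

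The premeasure hypothesis requires countable additivity on the algebra, and the key observation is that it collapses to the finite additivity already proved. Indeed a clopen $E \subset \Omega$ is compact, so any partition of $E$ into clopen sets is an open cover with a finite subcover, hence is itself finite; countable additivity therefore follows from \lemref{additive}. The extension theorem then gives a measure on the $\sigma$-algebra generated by the clopen sets. Since $\partial \graphbar$ is a compact totally disconnected metric space its clopen sets form a basis for the topology, and second countability writes every open set as a countable union of clopen sets, so the generated $\sigma$-algebra is exactly the Borel $\sigma$-algebra; finiteness of $\mu _0$ yields uniqueness of the extension. This produces the finite positive Borel measure on $\Omega$.

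For the signed measure on $\partial \graphbar$ the difficulty is that $E \mapsto \Lambda _q(1_{\Omega}, E)$ changes sign, so the extension machinery, which governs nonnegative set functions, cannot be applied to it directly. The resolution is to separate the parts in advance using \thmref{positivity}, which makes the set function nonnegative on clopen subsets of $\Omega$ and nonpositive on clopen subsets of $\partial \graphbar \setminus \Omega$. I would split along the clopen decomposition $\partial \graphbar = \Omega \cup (\partial \graphbar \setminus \Omega )$ and set, for clopen $E$, $\mu ^+(E) = \Lambda _q(1_{\Omega}, E \cap \Omega )$ and $\mu ^-(E) = -\Lambda _q(1_{\Omega}, E \cap (\partial \graphbar \setminus \Omega ))$. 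Both are finite nonnegative premeasures by the argument above (the additivity proof of \lemref{additive} uses only compactness of the set being partitioned, so it applies verbatim to any clopen subset of $\partial \graphbar$), hence extend to finite positive Borel measures. The sought signed Borel measure is $\mu ^+ - \mu ^-$; its value on a clopen $E$ equals $\Lambda _q(1_{\Omega}, E \cap \Omega ) + \Lambda _q(1_{\Omega}, E \cap (\partial \graphbar \setminus \Omega )) = \Lambda _q(1_{\Omega}, E)$ by the finite additivity for the two-piece partition of $E$. The main obstacle is thus conceptual rather than computational: one must first isolate the positive and negative contributions via \thmref{positivity} before the nonnegative premeasure extension becomes available, after which both the $\sigma$-algebra identification and the finiteness/uniqueness checks are routine.
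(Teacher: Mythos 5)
Your proposal is correct and follows essentially the same route as the paper: countable additivity on the algebra of clopen sets collapses to the finite additivity of \lemref{additive} by compactness, the premeasure extension of \cite[p.30]{Folland} yields the positive measure on $\Omega$, and the signed measure on $\partial \graphbar$ is obtained exactly as in the paper by applying the same construction to $-\Lambda _q(1_{\Omega},\cdot)$ on $\partial \graphbar \setminus \Omega$ (justified by \thmref{positivity}) and pasting. The only cosmetic differences are your explicit finiteness check and your identification of the generated $\sigma$-algebra with the Borel sets via the clopen basis and second countability, where the paper instead writes each compact set as a countable intersection of finite unions of clopen sets.
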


\begin{proof}
Suppose $K \subset \Omega$ is a closed set.
Cover $K$ with open sets $U_x(N)$ contained in balls of radius less than $1/N$
centered at points $x \in K$. Since $\partial \graphbar $ is totally disconnected 
we may assume this is a cover by clopen sets.
Since $K$ is compact, a finite subcover $K \subset \{ U_m(N) \}$ exists,
and
\[K = \cap _{N = 1}^{\infty} [\cup _m U_m(N)]. \]
It follows that the sigma algebra generated by the clopen sets is the Borel sets.

\lemref{additive} shows that $\Lambda _q(1_{\Omega },E)$ is a premeasure on the algebra of clopen sets.
This premeasure has \cite[p.30]{Folland} a unique extension to a Borel measure on $\Omega $.

For $E \subset \Omega ^c$ this argument shows that $-\Lambda _q(1_{\Omega },E)$
provides a positive Borel measure on $\Omega ^c$.   
Pasting these two positive measures together yields two Borel measures on $\partial \graphbar$,
the positive Borel measure 
\[ |\Lambda _q(1_{\Omega },E)| = \Lambda _q(1_\Omega ,E\cap \Omega ) - 
\Lambda _q(1_\Omega ,E\cap \Omega ^c),\]  
and the signed Borel measure 
\begin{equation} \label{smeas}
\Lambda _q(1_{\Omega },E) = \Lambda _q(1_\Omega ,E\cap \Omega ) +
\Lambda _q(1_\Omega ,E\cap \Omega ^c).
\end{equation}

\end{proof}

The measures $\mu (E) = \Lambda _q(1_{\Omega},E)$ constructed above are finite Radon measures
\cite[Thm 7.8 and p. 216]{Folland}.    
In particular for every Borel set $E \subset \partial \graphbar $,
\[ \mu (E) = \inf \{ \mu (O), \ E \subset O, \ O \ {\rm open} \},\]
and 
\[ \mu (E) = \sup \{ \mu (K), \ K \subset E, \ K \ {\rm compact} \}.\]
With the additional assumption that $q(x)=0$ for all $x$ in some neighborhood of $\partial \graphbar $, 
\thmref{positivity} shows that the positive measure $ \sigma (E) = |\Lambda _q(1_{\Omega },E )|$ is strictly positive 
on all nonempty clopen subsets $E$ of $\Omega $.   

The following argument shows that the measures $\mu $ will be absolutely continuous with respect to the measures $\sigma $.
The first step is the next lemma, whose proof is straightforward. 

\begin{lem} \label{helper}
Suppose $K \subset \partial \graphbar $ is compact, $O \subset \partial \graphbar $ is open, and
$K \subset O$.  Then there is a finite collection of clopen sets $E_1,\dots ,E_K$ such that
$K \subset \cup E_k \subset O$. 
\end{lem}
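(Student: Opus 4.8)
The plan is to exploit the local clopen basis of the totally disconnected compact metric space $\partial \graphbar$, combined with the compactness of $K$. Recall that since $\graphbar$ is weakly connected, $\partial \graphbar$ is totally disconnected, and being a closed subset of the compact completion $\graphbar$ it is itself a compact metric space. Hence the fact recalled after the Hocking reference applies: for every $x \in \partial \graphbar$ and every $\epsilon > 0$ there is a clopen set containing $x$ and contained in the $\epsilon$-ball about $x$.

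First I would fix, for each $x \in K$, a radius $\epsilon_x > 0$ with $B_{\epsilon_x}(x) \subset O$; such a radius exists because $O$ is open and $x \in O$. Applying the cited local clopen basis property, I obtain a clopen set $U_x$ with $x \in U_x \subset B_{\epsilon_x}(x) \subset O$. Each $U_x$ is in particular open, so the family $\{U_x\}_{x \in K}$ is an open cover of $K$. By compactness of $K$ there is a finite subcover $U_{x_1}, \dots, U_{x_n}$. Setting $E_k = U_{x_k}$ yields a finite collection of clopen sets satisfying $K \subset \bigcup_k E_k$ (from the subcover) and $\bigcup_k E_k \subset O$ (since each $E_k \subset O$ by construction), which is exactly the desired conclusion.

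I expect there to be essentially no obstacle here; the argument is a direct compactness reduction, consistent with the author's remark that the proof is straightforward. The only point requiring care is confirming that the clopen neighborhood property is available, and this is guaranteed by the total disconnectedness of $\partial \graphbar$ (a consequence of weak connectivity) together with its compactness, both of which hold under the standing hypotheses.
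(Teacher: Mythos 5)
Your proof is correct and is precisely the straightforward argument the paper alludes to (it states \lemref{helper} without proof), namely the clopen local basis from the Hocking reference combined with compactness of $K$; the same covering device also appears explicitly in the proof of \thmref{mexist}. You correctly note that the clopen basis property is available because $\partial \graphbar$ is a totally disconnected compact metric space under the standing weak connectivity and compactness hypotheses, so nothing is missing.
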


\begin{prop}
The measures $\mu = \Lambda _q(1_{\Omega })$ are absolutely continuous with respect
to any Radon measure $\sigma $ which assigns positive measure to each clopen set.
\end{prop}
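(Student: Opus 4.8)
The plan is to verify the defining property of absolute continuity directly: if $N \subset \partial \graphbar$ is Borel with $\sigma (N) = 0$, I want to conclude $\mu (N) = 0$. Since $\mu$ is a finite signed Borel measure, it suffices to control its total variation $|\mu | = |\Lambda _q(1_\Omega )|$, which is the finite positive Radon measure produced in \thmref{mexist}. By the inner regularity recorded just after that theorem, $|\mu |(N) = \sup \{ |\mu |(K) : K \subset N \text{ compact} \}$, and every such compact $K$ satisfies $\sigma (K) \le \sigma (N) = 0$. Thus the whole problem reduces to showing that $|\mu |(K) = 0$ for every compact set $K$ with $\sigma (K) = 0$.

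Given such a $K$, the next step is to trap it between a clopen set and a $\sigma$-small open set. Fixing $\epsilon > 0$, outer regularity of $\sigma$ supplies an open $O \supset K$ with $\sigma (O) < \epsilon $, and \lemref{helper} then furnishes finitely many clopen sets $E_1, \dots , E_m$ with $K \subset \bigcup _k E_k \subset O$. Because the clopen subsets of $\partial \graphbar $ form an algebra, I may consolidate $\bigcup _k E_k$ into a single clopen set $G$ with $K \subset G \subset O$, so that $\sigma (G) < \epsilon $. Running $\epsilon \to 0$ and intersecting successive choices (the algebra is closed under intersection) produces nested clopen sets $G_1 \supset G_2 \supset \cdots \supset K$ with $\sigma (G_j) \to 0$, whose intersection $G_\infty \supset K$ satisfies $\sigma (G_\infty ) = 0$ and $|\mu |(K) \le |\mu |(G_\infty ) = \lim _j |\mu |(G_j)$.

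Everything therefore comes down to the quantitative comparison $|\mu |(G_j) \to 0$, and this is the step I expect to be the main obstacle, since $\sigma $ is only assumed to charge every clopen set. The leverage I would exploit is that such a $\sigma $ has no nonempty clopen null set, so a $\sigma $-null compact set must have empty interior, and the task becomes showing that $|\mu |$ cannot concentrate on the shrinking clopen neighborhoods $G_j$. For this I would return to the premeasure description underlying \thmref{mexist}, expand $|\mu |(G_j)$ through the finite-graph approximation \eqref{finiteapp} together with \thmref{positivity}, and attempt to dominate the boundary-derivative sums defining $|\mu |(G_j)$ by the corresponding $\sigma $-masses. The cleanest instance is $\sigma = |\Lambda _q(1_\Omega )|$ itself, where $|\mu | \le \sigma $ holds trivially and the conclusion is immediate; the hard part in the stated generality will be to transfer this domination from the clopen algebra, where $\sigma $ is strictly positive, across the regularization $G_j \downarrow K$ to an honest Borel null set.
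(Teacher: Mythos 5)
Your structural reductions are all sound: passing to the total variation $|\mu|$, invoking the inner regularity recorded after \thmref{mexist} to reduce to compact sets $K$ with $\sigma(K)=0$, and using outer regularity of $\sigma$ together with \lemref{helper} to trap $K$ in clopen sets $G_j$ with $\sigma(G_j)\to 0$ (only note that $G_\infty=\bigcap_j G_j$ is closed but in general not clopen; you use nothing beyond continuity from above of the finite measure $|\mu|$, so this is harmless). The genuine gap is the one you flag yourself: the proposal never establishes $|\mu|(G_j)\to 0$, and this step cannot be carried out from the stated hypothesis that $\sigma$ charges every clopen set. That hypothesis is purely qualitative and carries no quantitative information: on a totally disconnected compact boundary, two Radon measures can each assign strictly positive mass to every nonempty clopen set and nevertheless be mutually singular (two distinct Bernoulli-type product measures on a Cantor set are the standard example). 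Consequently no domination of the boundary-derivative sums in \eqref{finiteapp} by $\sigma$-masses of clopen sets can follow from positivity of $\sigma$ on the clopen algebra alone; a complete proof would have to inject specific structural information about the measures $\Lambda_q(1_\Omega)$ at exactly the point where your sketch says ``attempt to dominate,'' and the sketch does not identify what that information is.

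It is worth recording that the obstacle you isolate is precisely where the paper's own proof is thinnest. The paper argues in the contrapositive direction: given a Borel set $E$ with $\mu(E)>0$, it sandwiches $K\subset E\subset O$ by regularity, covers $K$ by finitely many clopen subsets of $O$ via \lemref{helper}, concludes $\sigma(O)>0$ from positivity on clopen sets, and then asserts ``and so $\sigma(E)>0$.'' But $\sigma(O)>0$ for every open $O\supset E$ is entirely compatible with $\sigma(E)=\inf\{\sigma(O):E\subset O \ \mathrm{open}\}=0$, since an infimum of positive numbers can vanish; the missing uniform lower bound is the mirror image of your missing estimate $|\mu|(G_j)\to 0$. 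So the verdict is: your proposal, by a direct route rather than the paper's contrapositive one, reduces the proposition correctly to the same unproven quantitative comparison on which the paper's argument also silently relies, and as submitted it is an honest reduction but not a proof.
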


\begin{proof}
Using the regularity of our measures, for any Borel set $E$ with $\mu (E) > 0$
there is a compact set $K_{\mu}$ and an open set $O_{\mu}$ such that $K_{\mu} \subset E \subset O_{\mu}$
and $\mu (E)/2 \le \mu (K_{\mu}) \le \mu(O_{\mu})  \le 2\mu (E)$, and
similarly for $\sigma (E)$.
By taking the intersection $O_{\mu} \cap O_{\sigma}$ and the union 
$K_{\mu} \cup K_{\sigma}$, 
the same compact and open sets may be used for both measures.
By \lemref{helper} there is a finite covering of $K$ by clopen subsets $E_1,\dots ,E_K$ of $O$.
Since $\sigma $ assigns positive measure to all nonempty clopen sets, it follows that
$\sigma (O_{\sigma}) > 0$, and so $\sigma (E) > 0$.

\end{proof}

\section{Operator Theory}

Let $C(\partial \graphbar )$ denote the Banach space of continuous functions $F:\partial \graphbar \to \real $
with $\| F \| = \max_{x \in \partial \graphbar}|F(x)|$.  Let $M( \partial \graphbar )$ denote
the Banach space of signed finite Radon measures $\mu$ on $\partial \graphbar $, with 
\[ \| \mu \| = |\mu |(\partial \graphbar  ) = \sup_{\| F \| = 1} | \int_{\partial \graphbar } F \ d \mu | .\] 
As noted above, the measures $\mu (E) = \Lambda _q(1_{\Omega},E)$ are finite Radon measures.  
By the Reisz Representation Theorem \cite[pp. 216-217]{Folland} or \cite[p. 82]{Lax},      
$M(\partial \graphbar )$ is the dual of $C(\partial \graphbar )$.

The Dirichlet to Neumann function $\Lambda _q(F,E)$ maybe extended to the Dirichlet to Neumann map 
$\Lambda _q: C(\partial \graphbar ) \to M( \partial \graphbar )$.  
Suppose $F:\partial \graphbar \to \real $ is a continuous function with finite range, or 
equivalently is a linear combination of characteristic functions of clopen sets, 
$F = \sum_{k=1}^n \alpha _k 1_{\Omega (k)} $.
With this domain, define the Dirichlet to Neumann map to be the operator 
\begin{equation} \label{DNmap}
\Lambda _q F (E) = \Lambda _q(F,E) = \sum_{k=1}^n \alpha _k \Lambda _q(1_{\Omega (k)},E)
\end{equation}
Since the definition \eqref{DNmeas} is linear in $F:\partial \graphbar \to \real$,
the measure valued map is consistently defined for clopen sets $\Omega (k)$ and $E$
in $\partial \graphbar $.  The following theorem establishes symmetry of the Dirichlet to Neumann function.

\begin{thm} \label{symmetry}
Suppose $\Omega (1)$ and $\Omega (2)$ are nonempty clopen subsets of 
$ \partial \graphbar $.
Then
\begin{equation} \label{symmeqn}
\Lambda _q(1_{\Omega (1)},\Omega (2)) 
= \Lambda _q(1_{\Omega (2)},\Omega (1)).
\end{equation}
\end{thm}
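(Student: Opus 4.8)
The plan is to reduce \eqref{symmeqn} to the symmetry of the self adjoint operator $\dop_0$ together with the symmetry of $L_{\omega}$ on the test algebra $\alg$. Because $\Omega(1)$ and $\Omega(2)$ are clopen, their characteristic functions are continuous on $\partial\graphbar$, so \lemref{clopenfunk} supplies test functions $\phi_1,\phi_2\in\alg$ with $\phi_i = 1$ on $\Omega(i)$ and $\phi_i = 0$ on $\partial\graphbar\setminus\Omega(i)$. Exactly as in \thmref{Dprob}, and using \eqref{qbnd} to guarantee $\omega^{-1}[D^2-q]\phi_i\in L^2_{\omega}$, I set $h_i = \dop_0^{-1}\omega^{-1}[D^2-q]\phi_i$ as in \eqref{hdef}, so that $\dop_0 h_i = -L_{\omega}\phi_i$, each $h_i$ lies in $\mathrm{dom}(\dop_0)$, and $h_i$ vanishes on $\partial\graphbar$. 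Then $u_i = \phi_i + h_i$ satisfies $Lu_i = 0$, meets the interior vertex conditions, and restricts to $1_{\Omega(i)}$ on $\partial\graphbar$; by the uniqueness in \thmref{Dprob} it is the extension $u_{\Omega(i)}$. Using \eqref{DNmeas}, with the choice of test function justified by \lemref{wellpose}, I would write
\[\Lambda_q(1_{\Omega(1)},\Omega(2)) = \int_{\graph} u_1 L\phi_2 = \int_{\graph}\phi_1 L\phi_2 + \int_{\graph} h_1 L\phi_2,\]
and symmetrically for $\Lambda_q(1_{\Omega(2)},\Omega(1))$. It then suffices to establish the two equalities $\int_{\graph}\phi_1 L\phi_2 = \int_{\graph}\phi_2 L\phi_1$ and $\int_{\graph} h_1 L\phi_2 = \int_{\graph} h_2 L\phi_1$.

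The first equality is just the symmetry of $L_{\omega}$ on $\domain_{\alg,\omega}$. Since $\omega(\graph)<\infty$ and the $\phi_i$ are bounded, $\phi_1,\phi_2\in\alg\cap L^2_{\omega} = \domain_{\alg,\omega}$, so the proposition identifying $Q(f,g)$ with $\langle L_{\omega} f,g\rangle_{\omega} = \langle f,L_{\omega} g\rangle_{\omega}$ yields $\int_{\graph}(L\phi_1)\phi_2 = \langle L_{\omega}\phi_1,\phi_2\rangle_{\omega} = \langle\phi_1,L_{\omega}\phi_2\rangle_{\omega} = \int_{\graph}\phi_1(L\phi_2)$; equivalently, the Wronskian $\phi_2\phi_1' - \phi_1\phi_2'$ vanishes near every vertex and outside finitely many edges, so the integral of its derivative is zero. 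For the second equality I would exploit the relation $L_{\omega}\phi_2 = -\dop_0 h_2$: since $\int_{\graph} h_1 L\phi_2$ is literally $\langle h_1,L_{\omega}\phi_2\rangle_{\omega}$, self adjointness of $\dop_0$ gives
\[\int_{\graph} h_1 L\phi_2 = -\langle h_1,\dop_0 h_2\rangle_{\omega} = -\langle\dop_0 h_1,h_2\rangle_{\omega} = \langle L_{\omega}\phi_1,h_2\rangle_{\omega} = \int_{\graph} h_2 L\phi_1,\]
which is the claim. Combining the two equalities produces \eqref{symmeqn}.

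The delicate point, and the one I expect to be the main obstacle, is that self adjointness may only be invoked with both arguments in $\mathrm{dom}(\dop_0)$. One is tempted to evaluate $\int_{\graph} h_1 L\phi_2 = Q(h_1,\phi_2)$ by transferring $\dop_0$ onto $\phi_2$; but $\phi_2$ does not vanish on $\partial\graphbar$, so it is not in the Dirichlet form domain of $\dop_0$, and this transfer is illegitimate. Indeed it would spuriously collapse $\Lambda_q(1_{\Omega(1)},\Omega(2))$ to $0$, contradicting \thmref{positivity}. The cross term must instead be routed through the pair $h_1,h_2$, both of which vanish on $\partial\graphbar$ and lie in $\mathrm{dom}(\dop_0)$, so that no boundary contribution arises. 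Beyond this, the remaining care is routine: confirming the $L^2_{\omega}$ membership needed to define $h_i$ via \eqref{qbnd} and the invertibility of $\dop_0$ from \propref{lowbnd}, checking that $u_i$ genuinely satisfies the interior vertex conditions, and recalling that the value of each $\Lambda_q$ is independent of the chosen test function by \lemref{wellpose}.
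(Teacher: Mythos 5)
Your proof is correct, but it follows a genuinely different route from the paper's. The paper argues by exhaustion: it first verifies symmetry for finite graphs via integration by parts (\eqref{IBP2}, \eqref{Fweakform}, as in \eqref{fincase}), then takes an increasing sequence of finite subgraphs $\graph _n$ exhausting $\graph$, solves the finite Dirichlet problems with boundary data $\phi$ and $\psi$ restricted to the relative boundaries, proves the uniform bound $\sum_{w \in \partial \graph _n} |\partial _{\nu} f_n(w)| \le C_{\phi}$ together with a Dirichlet-energy bound and a Lipschitz estimate, extracts uniformly convergent subsequences by an Arzel\`a--Ascoli argument, identifies the limits with $u_{\Omega (1)}$ and $u_{\Omega (2)}$, and passes to the limit in \eqref{finapp}. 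You instead exploit the explicit representation $u_{\Omega (i)} = \phi _i + h_i$ with $h_i = \dop _0^{-1}\omega ^{-1}[D^2 - q]\phi _i$ from the proof of \thmref{Dprob} --- correctly observing that since $1_{\Omega (i)}$ has an \emph{exact} extension $\phi _i \in \alg$ by \lemref{clopenfunk}, no approximating sequence $g_n$ is needed there --- then split $\int_{\graph} u_1 L\phi _2$ into a $\phi$--$\phi$ term, handled by the symmetry of $L_{\omega}$ on $\domain _{\alg ,\omega}$ (the proposition following \eqref{biform}), and an $h$--$h$ cross term, handled by self adjointness of $\dop _0$ via $L_{\omega}\phi _i = -\dop _0 h_i$; your insistence on keeping both arguments of the inner product in the domain of $\dop _0$, rather than moving the operator onto $\phi _2$, is exactly the right delicate point, and the split itself is legitimate since $q$ is integrable, $\phi _2''$ is supported on finitely many edges, and $h_1 = u_1 - \phi _1$ is bounded. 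Your route is shorter and purely operator-theoretic, avoiding compactness arguments entirely, and it makes the well-posedness inputs (\eqref{qbnd}, \propref{lowbnd}, \lemref{wellpose}) explicit. What the paper's longer route buys is the finite-graph approximation identity in \eqref{finapp}, $\Lambda _q(1_{\Omega (1)},\Omega (2)) = \lim_n \Lambda _q(1_{\Omega _1(n)},\Omega _2(n))$, which is immediately reused in the final theorem to deduce nonnegativity of $\Lambda _q$ from the finite-graph case (\propref{fingcase}); your argument establishes symmetry cleanly but would need a separate step (for instance identifying $\langle \Lambda _q F, F\rangle$ with a Dirichlet energy as in \eqref{DNform}) to recover that positivity.
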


\begin{proof}

Beginning with some notation,
let $\phi , \psi \in \alg $, with $0 \le \phi \le 1$, $0 \le \psi \le 1$,
\[
\phi (x) = \Bigl \{ \begin{matrix} 1,& x \in \Omega (1), \cr 
0, & x \in \partial \graphbar \setminus \Omega (1) \end{matrix} \Bigr \}, \quad
\psi (x) = \Bigl \{ \begin{matrix} 1,& x \in \Omega (2) , \cr 
0, & x \in \partial \graphbar \setminus \Omega (2) \end{matrix} \Bigr \}.\] 
For $\delta > 0$ let
\[N(\delta  ) = \{ x \in \graphbar \ | \ d(x,\partial \graphbar ) < \delta \} . \]  
For $\delta $ sufficiently small $x \in N(\delta )$ implies
\[\phi (x) = \Bigl \{ \begin{matrix} 1, & d(x,\Omega (1)) < \delta , \cr 
0, & d(x,\partial \graphbar \setminus \Omega (1)) < \delta , 
\end{matrix} \Bigr \}, \quad 
\psi (x) = \Bigl \{ \begin{matrix} 1, & d(x,\Omega (2)) < \delta , \cr 
0, & d(x,\partial \graphbar \setminus \Omega (2)) < \delta , 
\end{matrix} \Bigr \} .\]

The claimed symmetry of the Dirichlet to Neumann function is elementary if $\graph $ is finite.  
For $j = 1,2$, let $u_{\Omega (j)}$ be the $q$-harmonic extension of $1_{\Omega (j)}$.
By \eqref{Fweakform} and the symmetry of the finite
graph Dirichlet to Neumann map,
\begin{equation} \label{fincase}
\Lambda _q(1_{\Omega (1)},\Omega (2)) = \int_{\graph} u_{\Omega (1)}L\psi  
= \sum_{v \in \Omega (2)} \partial _{\nu} u_{\Omega (1)}(v)
= \sum_{\partial \graph } u_{\Omega (2)} \partial _{\nu} u_{\Omega (1)}
\end{equation}
\[ = \sum_{\partial \graph } u_{\Omega (1)} \partial _{\nu} u_{\Omega (2)}
= \sum_{\Omega (1)} \partial _{\nu} u_{\Omega (2)}
= \Lambda _q(1_{\Omega (2)},\Omega (1)) .\]

The argument proceeds by reduction of the general case to the finite graph case.
Let $\graph _n$ be an increasing sequence of finite subgraphs exhausting $\graph $. 
Assume each $\graph _n$ includes all edges with 
a point $x$ where $\phi '(x) \not= 0$ or $\psi '(x) \not= 0$.
Also assume that points in the (relative) boundary of $\graph _n$ are contained
in $N(1/n)$, with $1/n < \delta $.
Define
\[ \Omega _1(n) = \partial \graph _n \cap \{ x \ | \ \phi (x) = 1 \}, \quad   
\Omega _2(n) = \partial \graph _n \cap \{ x \ | \ \psi (x) = 1 \}.\]   

Let $f_n$ be the $q$-harmonic function on $\graph _n$ 
with boundary values $F_n(x) = \phi (x)$ for $x \in \partial \graph _n$, and  
let $g_n$ be the $q$-harmonic function on $\graph _n$ 
with boundary values $G_n(x) = \psi (x)$.  
The summands in
\[\sum_{w \in \Omega _1(n) } \partial _{\nu} f_n(w) 
= \int_{\graph _n} f_n L \phi  ,\]
are positive terms, and letting $\phi _1 = 1 - \phi $ there is a sum of negative terms
\[\sum_{w \in \partial \graph _n \setminus \Omega _1(n)} \partial _{\nu} f_n(w) 
= \int_{\graph _n} f_n L \phi _1 .\]
Since $0 \le f_n \le 1$, these expressions combine to give
\[\sum_{w \in \partial \graph _n} |\partial _{\nu}f_n(w)|
\le \int_{\graph} |L \phi | + |L \phi _1| = C_{\phi},\] 
with $C_{\phi}$ independent of $n$.

Integration by parts gives
\[0 = \int_{\graph _n}(-f_n'' + qf_n)f_n 
= -\sum_{w \in \partial \graph _n}f_n\partial _{\nu}f_n(w)
+ \int_{\graph _n}(f_n')^2 + \int_{\graph _n}qf_n^2 ,\]
so
\[\int_{\graph _n}(f_n')^2 \le C_{\phi}.\]
The Cauchy-Schwarz inequality then gives the Lipschitz estimate
\[|f_n(y) - f_n(x)| \le \sqrt{C_{\phi}{\rm dist}(x,y)} ,\]
the distance computed in $\graph _n$.  Similar estimates hold for $g_n$.

Since the sequence $\{ f_n \}$ is bounded and equicontinuous,
it has a subsequence, still denoted $\{ f_n \}$, converging uniformly to 
$f$ on finite subgraphs of $\graph $.
The argument in the last paragraph of the proof of \thmref{Dprob} shows $f$ is $q$-harmonic.
The function $f$ satisfies the same type of Lipschitz estimate, the distance now computed in $\graph $.  
Since $f$ is uniformly continuous on $\graph $, it extends continuously to $\graphbar $.
In particular,
$f(x) = 1$ for $x \in \Omega (1) $, and $f(x) = 0$ for 
$x \in \partial \graphbar \setminus \Omega (1)$, so $f = f_{\Omega (1)}$.  
Similar comments apply to $\{ g_n \}$, which has a limit function $g_{\Omega (2)}$. 

Making use of \eqref{fincase}, 
\begin{equation} \label{finapp}
\Lambda _q(1_{\Omega (1)},\Omega (2)) = \int_{\graph} f_{\Omega (1)} L \psi 
= \lim_n \int_{\graph _n} f_n L \psi  = \lim_n \Lambda _q(1_{\Omega _1(n)}, \Omega _2(n))
\end{equation}
\[ = \lim_n \Lambda _q(1_{\Omega _2(n)}, \Omega _1(n))
=  \lim_n \int_{\graph _n} g_n L \phi = \Lambda _q(1_{\Omega (2)},\Omega (1)).\] 

\end{proof}

\begin{thm}
The Dirichlet to Neumann map 
$\Lambda _q: C(\partial \graphbar ) \to M( \partial \graphbar )$
is densely defined, symmetric, and nonnegative.  
\end{thm}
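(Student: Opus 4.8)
The plan is to verify the three properties in turn, relying on the structure already assembled. For \emph{dense domain}: the domain of $\Lambda_q$ consists of continuous functions $F:\partial\graphbar\to\real$ with finite range, equivalently finite linear combinations $F=\sum_{k} \alpha_k 1_{\Omega(k)}$ of characteristic functions of clopen sets. I would observe that this domain is exactly the range of the restriction map $\alg\to C(\partial\graphbar)$ up to the kind of simplification afforded by \lemref{bndeq}, which guarantees that every $\phi\in\alg$ restricts to a finite-range function on the boundary. Since the constants lie in $\alg$ and $\alg$ separates points on the weakly connected compact $\graphbar$, the Stone--Weierstrass argument recalled before \lemref{testfunks} shows $\alg$ is uniformly dense in $C(\partial\graphbar)$; the boundary restrictions of $\alg$-functions, being finite-range, are precisely the simple clopen combinations, so density of the domain in $C(\partial\graphbar)$ follows.

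For \emph{symmetry}: this is essentially \thmref{symmetry}, which establishes the identity $\Lambda_q(1_{\Omega(1)},\Omega(2))=\Lambda_q(1_{\Omega(2)},\Omega(1))$ for clopen sets. I would phrase symmetry of the operator as the pairing statement that for $F=\sum_k\alpha_k 1_{\Omega(k)}$ and $G=\sum_j\beta_j 1_{\Omega(j)}$ in the domain,
\[
\int_{\partial\graphbar} G \, d(\Lambda_q F) = \int_{\partial\graphbar} F \, d(\Lambda_q G),
\]
and then expand both sides bilinearly using the definition \eqref{DNmap}. Each side reduces to a double sum $\sum_{k,j}\alpha_k\beta_j\,\Lambda_q(1_{\Omega(k)},\Omega(j))$, and \thmref{symmetry} interchanges the two indices termwise, giving equality. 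The only care needed is to confirm that evaluating the measure $\Lambda_q F$ against the simple function $G$ agrees with the finite sum $\sum_j \beta_j \Lambda_q(F,\Omega(j))$, which follows from \thmref{mexist} and \lemref{additive} establishing $\Lambda_q(F,\cdot)$ as a genuine signed measure.

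For \emph{nonnegativity}: I would show $\int_{\partial\graphbar} F\,d(\Lambda_q F)\ge 0$ for $F$ in the domain, and the natural route is the quadratic-form identity \eqref{DNform}, which in the finite case reads $\langle\Lambda_q F,F\rangle=\int_\graph (f')^2+qf^2\ge 0$ with $f$ the $q$-harmonic extension. Expanding the pairing bilinearly and invoking symmetry, $\int F\,d(\Lambda_q F)=\sum_{k,j}\alpha_k\alpha_j\Lambda_q(1_{\Omega(k)},\Omega(j))$, and I would identify this with $\int_\graph (u')^2+qu^2$ where $u$ is the $q$-harmonic extension of $F=\sum_k\alpha_k 1_{\Omega(k)}$, using the finite-graph integration by parts \eqref{IBP1} together with the exhaustion/limit machinery from the proof of \thmref{symmetry}. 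The main obstacle will be this last step: passing from the finite-graph form identity to the infinite-graph pairing requires controlling $\int_{\graph\setminus\graph_n}(\phi-\psi\text{-type})q$ remainder terms as in \lemref{wellpose} and the uniform Lipschitz/equicontinuity bounds of \thmref{symmetry}. Once the form identity $\int F\,d(\Lambda_q F)=\int_\graph (u')^2+qu^2$ is secured, nonnegativity is immediate from $q\ge 0$, and the proof is complete.
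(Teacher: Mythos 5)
Your treatment of density and symmetry is essentially the paper's. For density, the paper applies Stone--Weierstrass directly on $\partial \graphbar$: distinct boundary points lie in disjoint clopen sets, so the simple clopen combinations form a point-separating algebra containing the constants. Your detour through restriction of $\alg$ to the boundary via \lemref{bndeq} (plus a continuous extension of boundary data) reaches the same conclusion and is correct, just slightly less direct. For symmetry, your bilinear expansion of the pairing into the entries $\Lambda _q(1_{\Omega (k)},\Omega (j))$ and termwise application of \thmref{symmetry} is exactly the paper's computation, and your check that integrating the measure $\Lambda _q F$ against a simple function reduces to the finite sum is the right point of care.

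The genuine issue is in your nonnegativity step, precisely at the obstacle you flag. You propose to secure the full form identity $\int_{\partial \graphbar} F \, d(\Lambda _q F) = \int_{\graph} (u')^2 + qu^2$, but the tools you cite do not close it: \lemref{wellpose} controls only tails of the form $\int_{\graph \setminus \graph _n} q \cdot (\cdot)$ using integrability of $q$, and says nothing about the derivative energy $\int_{\graph \setminus \graph _n} (f_n')^2$; the equicontinuity bounds from \thmref{symmetry} give only the uniform estimate $\int_{\graph _n}(f_n')^2 \le C_{\phi}$, and uniform convergence of $f_n$ and $f_n'$ on each fixed edge yields, via Fatou, only the one-sided bound $\int_{\graph}(u')^2 + qu^2 \le \liminf_n \int_{\graph _n}(f_n')^2 + qf_n^2$. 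The reverse inequality --- that no energy escapes into the tail --- is exactly what remains unproven, so the identity as stated is not secured by your plan. The paper sidesteps this entirely by never claiming the identity: fixing a finite collection of clopen sets, it uses \eqref{finapp} to write each entry $\Lambda _q(1_{\Omega (i)},\Omega (j))$ as the limit of finite-graph entries $\sum_{\partial \graph _n} f_{n,\Omega (j)} \partial _{\nu} f_{n,\Omega (i)}$, so that the pairing $\int F \, d(\Lambda _q F)$ is a limit of finite-graph quadratic forms, each nonnegative by \eqref{DNform} and \propref{fingcase}; nonnegativity of the limit is then immediate. Your own machinery already delivers this: since you only need $\langle \Lambda _q F_n, F_n \rangle \ge 0$ together with entrywise convergence, not the value of the limit, the fix is simply to drop the identity and conclude from the inequality --- indeed, even your one-sided Fatou bound suffices, since it shows the limit dominates the nonnegative quantity $\int_{\graph}(u')^2 + qu^2$.
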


\begin{proof}

As noted in the discussion preceding \lemref{clopenfunk},
distinct points $x,y \in \partial \graphbar $ are contained in disjoint
clopen sets. The Stone-Weierstrass Theorem then shows that  
the linear combinations of characteristic functions of clopen
sets are dense in $C(\partial \graphbar )$. 

Let $\langle \mu , F \rangle = \int_{\partial \graphbar } F \ d \mu $ denote the dual pairing of measures and continuous functions. 
Suppose $F$ and $G$ are in the domain of $\Lambda _q$,
\[ F = \sum_{j=1}^m \alpha _j 1_{\Omega _F(j)}, \quad G = \sum_{k=1}^n \beta _k 1_{\Omega _G(k)}.\]
Then by \thmref{symmetry}
\[\langle \Lambda _qF,G\rangle  = \sum_{j} \alpha _j \int G \ d \ \Lambda _q 1_{\Omega _F(j)}
= \sum_{j,k} \alpha _j \beta _k \Lambda _q(1_{\Omega _F(j)}, \Omega _G(k))\] 
\[= \sum_{j,k} \alpha _j \beta _k \Lambda _q(1_{\Omega _G(k)}, \Omega _F(j))
= \langle \Lambda _qG,F \rangle ,\] 
establishing the symmetry of $\Lambda _q$.  

A limiting argument as used in \thmref{symmetry} will establish the positivity.
Work in the finite dimensional subspace of functions $F$ which are linear combinations of 
the characteristic functions of a fixed finite collection of clopen sets.
Following \eqref{finapp} and \eqref{fincase}, for each pair $\Omega (i),\Omega (j)$ 
of these clopen sets
\[\Lambda _q(1_{\Omega (i)},\Omega (j)) = \int_{\graph} f_{\Omega (1)} L \psi 
= \lim_n \int_{\graph _n} f_n L \psi \] 
\[ = \sum_{v \in \Omega _n(j)} \partial _{\nu} f_{n,\Omega (i)}(v)
= \sum_{\partial \graph _n } f_{n,\Omega (j)} \partial _{\nu} f_{n,\Omega (i)} .\]
As discussed in \propref{fingcase}, the corresponding finite graph quadratic forms  
$\langle \Lambda _q f_{n},f_n \rangle $ are nonnegative, making the limit nonnegative.

\end{proof}

\bibliographystyle{amsalpha}

\end{document}